   \newcommand{\al}{\alpha}
    \newcommand{\la}{\lambda}
    \newcommand {\bC} {\mathbb C}
\newtheorem{theo+}           {Theorem}
\newtheorem{prop+}           {Proposition}
\newtheorem{coro+}           {Corollary}
\newtheorem{lemm+}           {Lemma}
\newtheorem{conjecture}      {\bf Conjecture}
\newtheorem{defi+}           {Definition}
\newtheorem{not+}            {Notation}
\newtheorem{rema+}           {Remark}
\begin{document}
\title[Asymptotic distribution of zeros  for hypergeometric polynomials]{Asymptotic distribution of zeros of a certain class of hypergeometric polynomials}

\author{Addisalem Abathun }
\address{Department of Mathematics, Addis Ababa University and Stockholm University}
\email{addisaa@math.su.se}
\author{Rikard B\o gvad}
\address{Department of Mathematics, Stockholm University}
\email{rikard@math.su.se}
\date \today
\begin{abstract}We study the asymptotic behavior of the zeros of a family of  a certain class of hypergeometric polynomials ${}_{A}\text{F}_{B}\left[ \begin{array}{c} -n,a_2,\ldots,a_A \\ b_1,b_2,\ldots,b_B \end{array} ; \begin{array}{cc} z \end{array}\right]$, using the associated hypergeometric differential equation, as the parameters go to infinity.  We describe the curve complex  on which the zeros cluster, as level curves associated to integrals on an algebraic curve derived from the equation in the spirit of theorems in \cite{BBS}. In a certain degenerate case we make a precise conjecture, based and generalizing work in \cite{KBD0,KDPD1,LDJG}, and present experimental evidence for it.
\end{abstract}

\maketitle

\section{Introduction}
The generalized hypergeometric function ${}_{A}\text{F}_{B}$ is defined by the series
\begin{equation}
{}_{A}\text{F}_{B}\left[ \begin{array}{c} (a)_A \\ (b)_B \end{array} ; \begin{array}{cc} z \end{array}\right]=
{}_{A}\text{F}_{B}\left[ \begin{array}{c} a_1,a_2,\ldots,a_A \\ b_1,b_2,\ldots,b_B \end{array} ; \begin{array}{cc} z \end{array}\right]
=\sum_{k=0}^{\infty}\frac{\prod_{j=1}^{j=A}(a_{j})_{k}}{\prod_{j=1}^{j=B}(b_{j})_{k}}\frac{z^k}{k!}\label{eq1} \end{equation}
 where $(\alpha)_{k}=\alpha(\alpha+1)\ldots(\alpha+k-1)=\frac{\Gamma(\alpha+k)}{\Gamma(\alpha)}$ is the Pochhammer symbol. It has $A$ numerator parameters $a_1,a_2,\ldots,a_A,$ $B$ denominator parameters $b_1,b_2,\ldots,b_B,$ and one variable $z.$ Any of these quantities may be complex, but the denominator parameters must not be nonpositive integers.
   If one of the numerator parameters is a negative integer, say $a_1=-n,n\in\mathbb{N}$, the series terminates and the series in \eqref{eq1} reduces to a polynomial of degree $n$ in $z$, called a {\it generalized hypergeometric polynomial}.  (As general references for hypergeometric functions see \cites{A,S})

   We are interested in the asymptotics of the zeros of the hypergeometric polynomial \begin{equation}\label{hyp} p_n(z)={}_{A}\text{F}_{B}(-n,a_2(n),\ldots,a_A(n);b_1(n),\ldots,b_B(n);z)
   \end{equation}
   with complex parameters, dependent linearly on $n$, as $n\to \infty$. We show that under some assumptions on the asymptotic behavior of the parameters the zeros cluster on a finite  union of curves, which may be identified as level curves of certain harmonic functions. 
    We will do this by studying the asymptotic limit of the {\it root counting measure} $\mu_n=(1/n)\sum_{p_n(z)=0}\delta_z$, where $\delta_z $ is the Dirac measure associated to $z\in \mathbb C.$
     In fact the support of the limit $\mu=\lim_{n \to \infty}\mu_n$ (which we know exists for subsequences, by a compactness argument) is given as the "singular set" of a certain subharmonic function, which is locally obtained as a maximum of certain integrals of algebraic functions associated to the differential equation.
      On the basis of graphical evidence we conjecture a more precise expression as a global maximum in a special situation and give a generalization of previous results.

  \section{Review of earlier works on zeros of hypergeometric functions}

 The problem of locating zeros of Gauss
hypergeometric polynomials ${}_{2}\text{F}_{1}(-n,b;c;z)$ when $b$ and $c$ are arbitrary parameters has not been completely solved, though it has been studied by several authors. Even when $b$ and $c$ are both real, one needs to impose additional conditions on $b$ and $c$, see \cite{LDJG,KBD0,KDPD1}. We will now describe certain of these results. 
\\Driver and Duren \cite{KDPD1} show
that the zeros of the hypergeometric polynomials $F(-n, kn + 1; kn + 2; z), \text{ where }
k, n \in N$, cluster on the loop of the lemniscate
$\{z: |z^k(1 - z)| = \frac{k^k}{(k + 1)^{k+1}}$, with $ Re(z) > \frac{k}{k + 1}\}$
 as $n\rightarrow \infty.$


 Duren and Guillou \cite{LDJG}, using an independent and direct proof, extend this result to the case where $k$ is arbitrary positive real number. They have obtained similar results for arbitrary $k>0$.
Moreover, they observed that every point of the curve is a cluster point of zeros.

 Boggs and Duren \cite{KBD0} extend the discussion of  \cite{LDJG} further to a more general case.
They prove that for each $k > 0$ and $l \geq 0$, the zeros of 
$${}_2F_1(-n, kn + l + 1; kn + l + 2; z)$$
cluster on the loop of the lemniscate
$${z: |z^k(1 - z)| = \frac{k^k}{(k + 1)^{k+1}}},  Re(z) > \frac{k}{k + 1}, \text{ as } n\to \infty.$$
They also proved the conjecture in Duren and Guillou \cite{LDJG} about zero-free regions of hypergeometric polynomials. This case is very interesting since its Euler integral representation can be approximated by either Laplace integral method or a method of steepest ascent or descent.

In this paper we in particular give a different explanation of the appearance of these lemniscates, as well as a proof of part of these results, as well as an extension to generalized hypergeometric functions.

 \section{The Hypergeometric differential equation}
 \subsection{Basic notations and Definitions}
 We will make extensive use of the differential equation that hypergometric polynomials satisfy, which we will describe in terms of differential operators. The following definition is standard.
\begin{definition}
An operator $$T=\sum\limits_{i=0}^{k}Q_{i}(z)\frac{d^i}{dz^i}$$ is called exactly solvable  if ${\rm deg}\  Q_i(z)\leq i $ and there exists at least one i such that ${\rm deg}\ Q_i(z)=i$.
It is called non-degenerate if ${\rm deg}\ Q_k(z)=k$.
\end{definition}
\begin{not+}
We denote the space of exactly solvable differential operators of order at most k by $\text{ES}_{k}$.
 \end{not+}
 Consider a parameterized curve $\text{T}_{\lambda} : \textbf{C}\to ES_K$ ,
i.e.
\begin{equation}
 T_\lambda=\sum\limits_{i=0}^{k}Q_{i}(z,\lambda)\frac{d^i}{dz^i}.\label{opo}
\end{equation}
Here we assume that $Q_i(z,\lambda)$ are polynomials and then ${\rm deg}_zQ_i(z,\lambda)\leq i$  by the condition of exactly solvability. We will furthermore only consider the case when ${\rm deg}_\lambda Q_i(z,\lambda)\leq k-i,$ and then call $T_\lambda$ a {\it spectral pencil}.
\begin{definition}
For each $\lambda \in \mathbb{C}$, a function $f$ which satisfies the equation $T_\lambda(f)=0$ is called a generalized eigenfunction belonging to the eigenvalue $\lambda$.
 \end{definition}
 A particular case is ordinary eigenfunctions of the operator $T$, that is when $T_{\lambda}=\sum_{i=1}^{k}Q_k\frac{d}{dz^i}  -\lambda$.  By definition, a generalized eigenfunction then corresponds to a solution of $Tf-\lambda f =0,$ that is, exactly an eigenfunction of $T$.
\\ A special case of a spectral pencil is the homogenized case, when
\begin{equation}
\label{eq:hom}
T_\lambda=\sum\limits_{i=0}^{k}Q_{i}(z)\lambda^{k-i}\frac{d^i}{dz^i}.
\end{equation}

In this case  $Q_i(z)=a_{i,i}z^i + a_{i,i-1}z^{i-1} + \ldots$ is a polynomial of  degree less than or equal to $ i$. To each spectral pencil (\ref{opo}) with $Q_i(z,\lambda)=R_i(z)\lambda^{k-i}+..$ we have the {\it associated
homogeneous spectral pencil}  $$T^h_\lambda=\sum\limits_{i=0}^{k}R_{i}(z)\lambda^{k-i}\frac{d^i}{dz^i}.$$
Furthermore, we may to the pencil associate a {\it characteristic equation}, whose roots will be important. If 
$R_i(z)=\sum_{j=0}^ia_{ij}z^j$, $i=0,...,k$, then the characteristic equation is
$$
\sum_{i=0}^ka_{ii}\lambda^{k-i}=0.
$$


  \subsection{The hypergeometric differential operator}
  The Gauss hypergeometric function
 ${}_{2}\text{F}_{1}[a,b;c;z]$ is a solution to Gauss hypergeometric differential equation
  \begin{equation*}
     z(1-z)\frac{d^2y}{dz^2} + [c-(a+b+1)z]\frac{dy}{dz}-aby=0,
     \end{equation*}
    Using $\triangle :=z\frac{d}{dz}$, the Gauss equation can also be written as
   \begin{equation*}\label{diff0}
 \triangle(\triangle +c-1)y=z(\triangle +a)(\triangle +b)y
     \end{equation*}


A corresponding result also holds for the generalized hypergeometric functions. The function
 ${}_{A}\text{F}_{B}[(a)_A;(b)_B;z]$
             satisfies the linear differential equation
  \begin{multline}\label{diff}
 \bigg(
 z\frac{d}{dz}\left(z\frac{d}{dz}+b_1-1\right)\left(z\frac{d}{dz}+b_2-1\right)\ldots\left(z\frac{d}{dz}+b_B-1\right)
 \\-z\left(z\frac{d}{dz}+a_1\right)\left(z\frac{d}{dz}+a_2\right)\ldots\left(z\frac{d}{dz}+a_A\right)\bigg)y=0
 \end{multline}
 This equation is called the generalized hypergeometric differential equation.


Equation \eqref{diff} can be written as
   \begin{equation}\label{diff2tri}
      {}_{A}T_{B}y=0,
      \end{equation}
    where
  \begin{equation}\label{oper1}
      {}_{A}T_{B}=
      \frac{d}{dz}\prod_{i=1}^{B}(z\frac{d}{dz}+b_i-1)-\prod_{i=1}^{A}(z\frac{d}{dz}+a_i)
      \end{equation}
      is the {\it hypergeometric differential operator}(we suppress the parameters in the notation).
       \begin{proposition}
       Let $A=B+1$. Then  ${}_AT_B$ is non-degenerate exactly solvable.
       \end{proposition}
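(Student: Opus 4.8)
The plan is to expand ${}_AT_B$ in the family of operators $z^i\,d^i/dz^i$, $i\ge 0$, and simply read off the degrees of the coefficient polynomials. The only tool needed is the classical relation between the Euler operator $\triangle=z\,d/dz$ and these operators: for every $m\ge 0$,
\[
z^m\,\frac{d^m}{dz^m}=\triangle(\triangle-1)\cdots(\triangle-m+1),
\]
and, inverting this triangular unit-diagonal change of basis, $\triangle^m=\sum_{i=0}^m S(m,i)\,z^i d^i/dz^i$, where the $S(m,i)$ are the Stirling numbers of the second kind; in particular the sum runs only over $i\le m$, with $S(m,m)=1$.

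First I would treat the second summand $\prod_{i=1}^{A}(\triangle+a_i)$ of ${}_AT_B$. Multiplied out it is a polynomial in $\triangle$, monic of degree $A$, say $\sum_{m=0}^A e_m\triangle^m$ with $e_A=1$. Substituting the expansion of $\triangle^m$ and collecting, $\prod_{i=1}^{A}(\triangle+a_i)=\sum_{i=0}^A c_i\,z^i d^i/dz^i$, where each $c_i=\sum_{m\ge i}e_m S(m,i)$ is a constant and $c_A=e_A S(A,A)=1$. Thus this summand contributes a constant multiple of $z^i$, of degree $\le i$, to the coefficient of $d^i/dz^i$, and exactly $z^A$ to the coefficient of $d^A/dz^A$.

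For the first summand $\frac{d}{dz}\prod_{i=1}^{B}(\triangle+b_i-1)$ the same reasoning gives $\prod_{i=1}^{B}(\triangle+b_i-1)=\sum_{i=0}^B g_i\,z^i d^i/dz^i$ with constants $g_i$ and $g_B=1$. Applying $d/dz$ on the left, using $\frac{d}{dz}\bigl(z^i d^i/dz^i\bigr)=i\,z^{i-1}d^i/dz^i+z^i d^{i+1}/dz^{i+1}$, and reindexing, one gets $\frac{d}{dz}\prod_{i=1}^{B}(\triangle+b_i-1)=\sum_{j\ge 1}(j\,g_j+g_{j-1})\,z^{j-1}d^j/dz^j$ (with the conventions $g_{B+1}=g_{-1}=0$). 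Hence here the coefficient of $d^j/dz^j$ has degree $\le j-1<j$, and for the top index $j=B+1$ it equals $g_B z^B=z^{B}$.

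Finally I combine the two, using the hypothesis $A=B+1$, which makes both summands have order exactly $A$; write ${}_AT_B=\sum_{i=0}^A Q_i(z)\,d^i/dz^i$. The two computations above give $\deg Q_i\le i$ for all $i$, so ${}_AT_B\in\text{ES}_A$, and at top order they give $Q_A(z)=z^{B}-z^{A}=z^{A-1}-z^{A}$, of degree exactly $A$ --- the two top-order contributions cannot cancel, as they carry different powers of $z$, and it is precisely here that $A=B+1$ is used (to make the order equal to $A$ to begin with). Therefore $\deg Q_A=A$, i.e. ${}_AT_B$ is non-degenerate, and a fortiori the exact-solvability requirement ``$\deg Q_i=i$ for some $i$'' is met. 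The computation is routine; the part that calls for a little care --- the closest thing to an obstacle --- is the reindexing after the extra $d/dz$ in the first summand together with checking that $Q_A\not\equiv 0$.
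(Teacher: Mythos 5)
Your proof is correct and takes essentially the same route as the paper: expand in the Euler operator $\triangle$, convert $\triangle^m$ into the operators $z^i\,d^i/dz^i$ (the paper's integer coefficients $a_j(i)$ are exactly your Stirling numbers $S(i,j)$), and read off $\deg Q_j\le j$ together with the top coefficient $z^{B}(1-z)$ of degree $B+1$. The only cosmetic difference is that the paper rewrites the leftmost $d/dz$ as $z^{-1}\triangle$ and expands the whole operator at once, whereas you keep $d/dz$ and handle it by a Leibniz-rule reindexing.
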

       \begin{proof}
       We can write
       \begin{equation*}\label{oper11}
      {}_{A}T_{B}=
     \frac{1}{z} \triangle \prod_{i=1}^{B}(\triangle +b_i-1)-\prod_{i=1}^{B+1}(\triangle +a_i).
      \end{equation*}
     Simplifying the right hand side we obtain 
     
      \begin{equation}\label{oper12}
      {}_{A}T_{B}=
     ( \frac{1}{z}-1)\triangle^{B+1}+\frac{(e_{1}^{\tilde{b}}-e_1^{a}z)}{z}\triangle^B+...+\frac{(e_{B}^{\tilde{b}}-e_B^{a}z)}{z}\triangle-e_{B+1}^a.
      \end{equation} 
      Here
      $$e_i^{a}:=\sum_{1\leq j_1<\ldots < j_i\leq A}a_{j_1}\ldots a_{j_i} \quad (i=1,2,\ldots,B+1),$$
      are the elementary symmetric functions in $a_1,...,a_{B+1}$, and 
      $e_i^{\tilde{b}}$ similarly are the
      elementary functions in $ \tilde{b}_{j}=b_{j}-1, j=1,2,\ldots B.$
      Now note that $$\triangle^i=z^i\frac{d^{i}}{dz^{i}}+...+z\frac{d}{dz}=\sum_{j=1}^{i}a_j(i)z^j\frac{d^{j}}{dz^{j}},$$
      for some integers $a_j(i)$.
       Then further simplifying the right hand side of \eqref{oper12} we obtain
      \begin{equation*}\label{oper12:2}
      {}_{A}T_{B}=
      z^B(1-z)\frac{d^{B+1}}{dz^{B+1}}+\sum_{j=1}^{B}(C_j-D_jz)z^{j-1}\frac{d^{j}}{dz^{j}}-e_{B+1}^a,
      \end{equation*}
    
     where \begin{equation}
       \label{eq:coeff1}
C_j-D_jz=a_j(B+1)(1-z)+\sum_{i=1}^{B+1-j}(e_{i}^{\tilde{b}}-e_i^{a}z)a_j(B+1-i).
\end{equation}

       The degree of the coefficient polynomial $Q_{j}(z)$ of $\frac{d^j}{dz^j}$ for all $j=1,2,\ldots,B+1$ is then less than or equal to $j$ and in fact $\text{ deg }Q_{B+1}=B+1$. Hence ${}_{A}T_{B}$ is an exactly solvable non-degenerate operator.

       \end{proof}
 \begin{proposition}
 Let $a_i=\al_i n+c_i$, where $\al_i\in\bC \setminus \{0\}$, and $c_i\in\bC,$  $i=1,2,\ldots,A$  and  $b_j=\beta_i n+d_i+1$,  $\beta_i,\ d_i \in\bC$,   $j=1,2,\ldots,B$, and assume that $A=B+1.$ Using $\lambda=n$ as spectral parameter,  $T_n={}_AT_B$ is a spectral pencil as in \eqref{opo} for large $n$. If $c_i=0$ and $d_i=0$ for all $i$ then this pencil is homogenized. In general, its characteristic equation is
  $$\prod\limits_{j=1}^{A}\big( 1+\lambda \al_i\big) = 0.$$
 \end{proposition}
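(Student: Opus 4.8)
The plan is to work entirely from the expansion \eqref{oper12} of ${}_AT_B$ in powers of $\triangle=z\frac{d}{dz}$, substitute $a_i=\alpha_i n+c_i$ and $\tilde b_i:=b_i-1=\beta_i n+d_i$, and keep careful track of the degrees in $z$ and in $n$. The only input needed is the elementary remark that $e_i^a=e_i(a_1,\dots,a_A)$ and $e_i^{\tilde b}=e_i(\tilde b_1,\dots,\tilde b_B)$ are then polynomials in $n$ of degree at most $i$, the coefficient of $n^i$ being $e_i(\alpha_1,\dots,\alpha_A)$, resp.\ $e_i(\beta_1,\dots,\beta_B)$. Inserting this into \eqref{oper12}, the coefficient of $\triangle^{B+1-i}$ equals $(e_i^{\tilde b}-e_i^az)/z$ for $1\le i\le B$, it is $1/z-1$ for $i=0$, and the zeroth order term is $-e_{B+1}^a$; each of these is a Laurent polynomial in $z$ of the shape seen in the previous proof, now carrying $n$-degree at most $i$.

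Passing to the $\frac{d}{dz}$-form via $\triangle^m=\sum_{j=1}^m S(m,j)z^j\frac{d^j}{dz^j}$ (Stirling numbers of the second kind), exactly as in the proof of the previous proposition, one reads off the coefficient $Q_j(z,n)$ of $\frac{d^j}{dz^j}$. Since $\triangle^{B+1-i}$ feeds order $j$ only when $B+1-i\ge j$, i.e.\ $i\le B+1-j$, and the prefactor $(e_i^{\tilde b}-e_i^az)/z$ raises the $z$-degree by at most $1$ and lowers it by $1$, one gets immediately $\deg_z Q_j\le j$ and $\deg_n Q_j\le (B+1)-j$. Together with the non-degeneracy already established (the top coefficient being $z^B(1-z)$), this shows that, for $n$ large, $T_n$ is a spectral pencil with $k=B+1$. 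When $c_i=0$ and $d_i=0$ the symmetric functions become homogeneous, $e_i^{\tilde b}=n^i e_i(\beta)$ and $e_i^a=n^i e_i(\alpha)$, so in \eqref{oper12} the coefficient of $\triangle^{B+1-i}$ is exactly $n^i$ times a function of $z$; the pencil is then homogeneous in $\lambda$ and $\triangle$ jointly, i.e.\ homogenized.

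For the characteristic equation I would compute $a_{jj}$, the coefficient of $z^j\lambda^{B+1-j}$ in $Q_j$, as the coefficient of $z^j n^{(B+1)-j}$ in the expression above. By the degree count, the only source of a $z^j n^{(B+1)-j}$ term is the summand $i=B+1-j$: there $\triangle^{B+1-i}=\triangle^j$ contributes its top piece $z^j\frac{d^j}{dz^j}$ with Stirling coefficient $S(j,j)=1$, and pairing it with the $-e^a_{B+1-j}z$ part — whose $n^{(B+1)-j}$-coefficient is $e_{B+1-j}(\alpha_1,\dots,\alpha_A)$ — gives $a_{jj}=-e_{B+1-j}(\alpha_1,\dots,\alpha_A)$; the boundary values $j=0$ (from $-e^a_{B+1}$) and $j=B+1$ (from $-\triangle^{B+1}$) come out the same way. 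Hence for $j=0,\dots,A$ one has $a_{jj}=-e_{A-j}(\alpha_1,\dots,\alpha_A)$, and the characteristic equation $\sum_{j=0}^{A}a_{jj}\lambda^{A-j}=0$ reads $\sum_{r=0}^{A}e_r(\alpha_1,\dots,\alpha_A)\lambda^{r}=0$, which by the generating identity for elementary symmetric functions is exactly $\prod_{i=1}^{A}(1+\alpha_i\lambda)=0$. The only real obstacle here is keeping the Stirling-number and symmetric-function bookkeeping straight; once the $n$-degrees are pinned down (they are tight), everything reduces to that one generating-function identity.
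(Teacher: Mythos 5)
Your proposal is correct and follows essentially the same route as the paper: both start from the $\triangle$-expansion \eqref{oper12} obtained in the previous proposition (your Stirling numbers $S(m,j)$ are the paper's coefficients $a_j(i)$), use that $e_i^a$ and $e_i^{\tilde b}$ have $n$-degree at most $i$ with leading coefficients $e_i(\alpha)$, $e_i(\beta)$ to get the degree condition $\deg_n Q_j\le B+1-j$, and then read off the diagonal coefficients $a_{jj}=-e_{B+1-j}(\alpha)$ so that the characteristic equation becomes $\sum_r e_r(\alpha)\lambda^r=\prod_{i=1}^{A}(1+\alpha_i\lambda)=0$. Your write-up is simply a more explicit version of the paper's argument, with the boundary cases $j=0$, $j=B+1$ and the generating-function identity spelled out.
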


 \begin{proof}
 Using the notation from the proof of the preceding proposition, it is clear that
$$e_i^{a}:=e_i^{\alpha}n^i+\text{ lower terms in }n....,$$
and 
      $e_i^{\tilde{b}}=e_i^{\beta}n^i+\text{ lower terms in }n....$.
      Hence by (\ref{eq:coeff1})
     \begin{equation}
(C_j-D_jz)z^{j-1}=(e_{B+1-j}^{\beta}-e_{B+1-j}^{\alpha}z)z^{j-1}n^{B+1-j}+\text{ lower terms in }n,
\end{equation}
and this implies that the pencil satisfies the degree condition in $n$.
      
Taking the associated homogeneous equation means that we drop the lower terms in $n$, and then the  characteristic equation is $e^{\al}_{B+1}n^A+e^{\al}_{B}n^{A-1}+\ldots+e^{\al}_{1}n+1=0,$ which implies the rest of the proposition.

\end{proof}
We will now make a further non-degeneracy assumption, adapted from a corresponding condition in \cite[Prop. 5]{BBS}. Recall that we are interested in the situation when $a_1=-n$ or equivalently $\alpha_1=-1$. In this case the solutions to the characteristic equation are 
$$\la_1=1 ,\la_2=\frac{-1}{\al_2},\ldots,\la_A=\frac{-1}{\al_A}.$$

\begin{definition}
A hypergeometric spectral pencil $T_\lambda$ is called of {\it general type} if the characteristic equation has simple roots, i. e. no two $\alpha_i,  i=2,...,A$ coincide and they are all distinct from $-1$, and furthermore no $-\alpha_i,  i=2,...,A$ is contained in the interval $]-\infty, 1]$ on the real axis.
\end{definition}

This implies that the results of \cite{BBS} hold (even though the definition of general type in that paper is slightly different), as can be seen by Proposition 5 of that paper. We will discuss them in the next section.

\subsection{The Cauchy Transform }
In this section we will study zeros of polynomials in the form of probability measures.
\begin{definition}

Let $p_n(z)$ be a polynomial of degree $n$ with zeros $\zeta_1,\zeta_2,\ldots,\zeta_n$  listed according to multiplicities. Define
\begin{equation}\label{mu}
\mu_{n}=\frac{1}{n}\sum_{\nu=1}^{n}\delta_{\zeta_\nu}
\end{equation}
where $\delta_{\zeta_\nu}$ is a Dirac measure; we call $\mu_n$  a root counting measure of $p_n$. Clearly $\mu_n$ is a probability measure of total mass $1$.
\end{definition}

It is possible to recover the logarithmic derivative $\frac{p'_n}{np_n}$ as the Cauchy transform of $\mu_n$ using the following definitions.

\begin{definition}
 Let $\mu$ be a finite, compactly supported measure. The Cauchy transform $C_\mu$ of $\mu$ is the function
\begin{equation}\label{ch}
C_\mu(z)=\int\frac{d\mu(\zeta)}{\zeta-z}.
\end{equation}
 
\end{definition}
The integral converges for all z, for which the Newtonian potential
\begin{equation*}
U_|\mu|(z)=\int\frac{d|\mu|(\zeta)}{|\zeta-z|}
\end{equation*}
is finite. Since $\frac{1}{|z|}\in L_{loc}^1$, Fubini's theorem gives that $U_{|\mu|}$ converges almost everywhere and $C_\mu$ is in $L^1_{loc}.$ Moreover, $C_\mu$ is analytic on $\bC \setminus \text{ Supp }\mu.$

For a sequence of generalized eigenpolynomials of a non-degenerate spectral pencil, such that the corresponding root measure converges, one may from \cite{BBS,BB} conclude that the Cauchy transforms of the root measures converges to a $L^1_{loc}$-function satisfying an algebraic equation. In the present case this algebraic equation has a particularily simple  form.
 \begin{theorem}
\label{cauchy}
Let $\{p_n\}$  be a family of hypergeometric polynomials 
$$p_n(z) = {}_{A}\text{F}_{B}\left[ \begin{array}{c} (a) \\ (b) \end{array} ; \begin{array}{cc} z \end{array} \right],$$ where 
 $a_1=-n=\alpha_1n$ and $a_i=\al_i n+c_i$, for some $\al_i, c_i\in\bC \setminus \{0\}$, and  $b_j=\beta_i n+d_i+1$, for some $\beta_i,d_i \in\bC$,   $j=1,2,\ldots,B$, are the parameters and we assume that $A=B+1.$ 
 Let $\mu_n$ be the root measure of $p_n$. Assume that $\alpha_i\neq 0,\ i=1,...,A$. If the subsequence  $\mu_{n_i}, \ i=1,...$ of root measures converges weakly to the finite measure $\mu$ the following holds:
 \begin{itemize}
 \item[(i)]  $C_{\mu_{n_i}}\to C_\mu$ almost everywhere in $\bC$
\item[(ii)] $w=C_\mu(z)$ satisfies the algebraic equation 
\begin{equation}
\label{eq:alg}
A(z,w)=\prod_{i=1}^{i=A}(zw+\alpha_i)-w\prod_{i=1}^{i=B}(zw+\beta_i)=M(zw) - wN(zw)=0.
\end{equation}

  \end{itemize}
  
  Furthermore if the pencil is of general type there always exist subsequences of $\mu_n,\ n=1,...$ that do converge to finite measures.
    \end{theorem}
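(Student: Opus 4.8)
The plan is to use the hypergeometric equation ${}_AT_Bp_n=0$ to force an algebraic relation on the normalized logarithmic derivatives $u_n(z):=p_n'(z)/p_n(z)=n\,C_{\mu_n}(z)$ and then to pass to the limit along the subsequence $\{n_i\}$. Writing the operator as in the proof above, ${}_AT_B=\sum_{j=0}^{B+1}Q_j(z,n)\,d^j/dz^j$, the hypothesis $\al_i\neq0$ for all $i$ guarantees, by the leading--order computations in the proofs of the two preceding propositions, that
\[
Q_j(z,n)=R_j(z)\,n^{B+1-j}+O(n^{B-j}),\qquad
R_0(z)=-e^{\al}_{B+1},\qquad
R_j(z)=\big(e^{\be}_{B+1-j}-e^{\al}_{B+1-j}z\big)z^{j-1}\ \ (1\le j\le B+1),
\]
where $e^{\al}_k$ and $e^{\be}_k$ denote the elementary symmetric functions of $\al_1,\dots,\al_A$ and of $\be_1,\dots,\be_B$ respectively. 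A short symmetric--function resummation, of exactly the kind carried out in those proofs, then identifies the ``symbol'' of the pencil,
\[
\sum_{j=0}^{B+1}R_j(z)\,w^{\,j}=w\prod_{i=1}^{B}(zw+\be_i)-\prod_{i=1}^{A}(zw+\al_i)=-A(z,w),
\]
so that equation \eqref{eq:alg} is, up to an overall sign, just $\sum_{j=0}^{B+1}R_j(z)\,w^{\,j}=0$.

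Next I would establish \eqref{eq:alg} for $w=C_\mu$ on $\Om:=\bC\setminus\operatorname{supp}\mu$. On $\bC\setminus\{p_n=0\}$, dividing ${}_AT_Bp_n=0$ by $p_n$ gives $\sum_{j=0}^{B+1}Q_j(z,n)\,p_n^{(j)}/p_n=0$, and the Bell--polynomial (Fa\`a di Bruno) identity writes each $p_n^{(j)}/p_n$ as a universal polynomial in $u_n,u_n',\dots,u_n^{(j-1)}$ whose leading term is $u_n^{\,j}$ and each of whose other monomials is a product of at most $j-1$ factors of the form $u_n^{(m)}=n\,C_{\mu_n}^{(m)}$. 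Granting that the zeros of $p_{n_i}$ cluster on $\operatorname{supp}\mu$, on any compact $L\subset\Om$ the $p_{n_i}$ are eventually zero--free there, so the $C_{\mu_{n_i}}$ are uniformly bounded and holomorphic on $L$, and by a normal--families argument $C_{\mu_{n_i}}$ together with all of its derivatives converge uniformly on $L$ to $C_\mu$ and its derivatives. Hence every non--leading monomial of $p_{n_i}^{(j)}/p_{n_i}$ is $O(n^{j-1})$ uniformly on $L$, while $Q_j\,u_{n_i}^{\,j}=R_j(z)\,n^{B+1}\,C_{\mu_{n_i}}^{\,j}+O(n^{B})$; summing over $j$, dividing by $n^{B+1}$ and letting $n_i\to\infty$ gives $\sum_{j=0}^{B+1}R_j(z)\,C_\mu(z)^{\,j}=0$, i.e. $A(z,C_\mu(z))=0$, on $\Om$.

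To promote this to statement (ii), I would observe that on $\Om$ the function $C_\mu$ is a local branch of the algebraic function cut out by $A(z,w)=0$, hence extends holomorphically across all of $\operatorname{supp}\mu$ save across a finite union of real--analytic arcs together with finitely many points --- this is the structural description of asymptotic root measures of \cite{BB} --- so that $\operatorname{supp}\mu$ has planar Lebesgue measure zero and $A(z,C_\mu(z))=0$ almost everywhere on $\bC$. For statement (i): weak convergence $\mu_{n_i}\to\mu$ immediately gives $C_{\mu_{n_i}}\to C_\mu$ in $\D'(\bC)$ (a routine consequence of weak convergence, testing against $\vf\in C_c^\infty(\bC)$), while $\{C_{\mu_{n_i}}\}$ is bounded in $L^p_{loc}(\bC)$ for $1\le p<2$ since $z\mapsto(z-\zeta)^{-1}$ lies in $L^p_{loc}$ uniformly in $\zeta$ over compacts; the upgrade of these two facts to almost--everywhere convergence is the corresponding statement in \cite{BB,BBS}, applicable because $T_n$ is a non--degenerate exactly solvable pencil. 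Finally, if the pencil is of general type then \cite[Prop.\ 5]{BBS} shows that the zeros of all the $p_n$ lie in one fixed compact set, so $\{\mu_n\}$ is tight and Helly's selection theorem yields subsequences converging weakly to probability --- in particular finite --- measures, which is the last assertion.

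The step I expect to be the main obstacle is precisely this limit passage after the division by $p_n$: near a point outside $\operatorname{supp}\mu$ one needs not merely that the $\mu_{n_i}$ eventually carry little mass there, but that the zeros of $p_{n_i}$ genuinely vacate a neighbourhood of it, so that $u_{n_i}$ and its derivatives really are $O(n)$ there; and one needs the almost--everywhere, not merely distributional, convergence of $C_{\mu_{n_i}}$ to $C_\mu$. Both are exactly what the analysis of non--degenerate exactly solvable spectral pencils in \cite{BBS,BB} provides, and the general--type hypothesis is what makes those results (in particular the confinement of the zeros to a fixed compact set) available here.
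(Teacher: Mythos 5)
Your proposal is correct and follows essentially the same route as the paper: you identify the limiting algebraic equation by dividing ${}_AT_Bp_n=0$ by $p_n$ and expressing $p_n^{(j)}/p_n$ as $n^jC_{\mu_n}^j$ plus lower-order terms (the paper's lemma), and you delegate exactly the same hard analytic steps --- the almost-everywhere convergence of the Cauchy transforms, the legitimacy of the limit passage, and the confinement of the zeros to a fixed compact set for pencils of general type --- to \cite{BBS}, \cite{BBSc} and \cite{BB}, just as the paper does. The obstacle you flag (zeros genuinely vacating neighbourhoods off the support, rather than merely carrying little mass) is precisely what the paper also does not prove itself but imports from Proposition 2 of \cite{BBSc}, presenting its own computation only as a heuristic under a boundedness assumption.
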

    
Note that $C_\mu$ does not have to be a continuous function. In fact the locus of non-continuity is more or less the support  of the limit measure $\mu$, as we will see. We will give a sketch of the proof of the theorem, and for this we need a lemma, describing the non-linear differential equation satisfied by the logarithmic derivatives of the polynomials in the sequence. With the help of this description we can 
transform the linear differential equation for $p_n$ into a non-linear equation for the logarithmic derivative.

\begin{lemma}
Let $p_n(z)=C\prod\limits_{j=1}^{n}(z-\zeta_j),$ where $C$ is a non zero constant. Let $\mu_n$ be the root counting measure of $p_n.$ The Cauchy  transform is
$$ C_{\mu_{n}}(z)=\frac{1}{n}\sum\limits_{\nu=1}^{n}\frac{1}{z-\zeta_{\nu}}=\frac{p_n'}{np_n}.$$
Furthermore
\begin{equation}\label{cau3}
 \frac{p^{(i)}_n}{p_n}= n^iC^i_{\mu_n}+G(n,C_{\mu_n},C'_{\mu_n},\dots,C^{(i-1)}_{\mu_n}),
 \end{equation}
 where
 $G$
 is a polynomial, of degree $i-1$ in the variable $n.$
 \end{lemma}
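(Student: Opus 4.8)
The plan is to obtain both assertions directly from the logarithmic derivative, the second one by induction on $i$.

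For the first identity, write $p_n(z)=C\prod_{j=1}^{n}(z-\zeta_j)$ and differentiate $\log p_n(z)=\log C+\sum_{j=1}^{n}\log(z-\zeta_j)$ on $\bC\setminus\{\zeta_1,\dots,\zeta_n\}$; this gives the partial fraction expansion $p_n'/p_n=\sum_{\nu=1}^{n}1/(z-\zeta_\nu)$, and dividing by $n$ identifies this, via the defining integral \eqref{ch}, with the Cauchy transform of $\mu_n=(1/n)\sum_{\nu}\delta_{\zeta_\nu}$ (with the sign convention of the statement). This step is nothing but the fact that a rational function whose numerator has degree one less than its monic denominator equals the sum of its simple partial fractions at the roots.

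For \eqref{cau3}, set $f:=p_n'/p_n=nC_{\mu_n}$, so $p_n'=f\,p_n$. Differentiating $p_n^{(i)}=h_i\,p_n$ and using $p_n'=f p_n$ gives the recursion $h_{i+1}=h_i'+f\,h_i$ for $h_i:=p_n^{(i)}/p_n$. I would then prove by induction on $i$ that $h_i=n^iC_{\mu_n}^i+G_i$ with $G_i=G(n,C_{\mu_n},C_{\mu_n}',\dots,C_{\mu_n}^{(i-1)})$ a polynomial in $n$ of degree at most $i-1$ with coefficients polynomial in $C_{\mu_n},\dots,C_{\mu_n}^{(i-1)}$; the base case $i=1$ is the first part of the lemma, with $G_1=0$. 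Substituting the inductive hypothesis into the recursion and regrouping the top-degree term yields
\[
h_{i+1}=n^{i+1}C_{\mu_n}^{i+1}+\Bigl(i\,n^{i}C_{\mu_n}^{i-1}C_{\mu_n}'+n\,C_{\mu_n}\,G_i+G_i'\Bigr),
\]
so it remains to see that the bracketed $G_{i+1}$ has the required form. Differentiation in $z$ leaves the degree in $n$ unchanged, so $G_i'$ has degree $\le i-1$; multiplication by $f=nC_{\mu_n}$ raises the degree in $n$ by exactly one, so $n\,C_{\mu_n}\,G_i$ has degree $\le i$; and $i\,n^iC_{\mu_n}^{i-1}C_{\mu_n}'$ has degree $i$. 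Hence $\deg_n G_{i+1}\le i=(i+1)-1$, and since $G_i'$ introduces at most one extra $z$-derivative, $G_{i+1}$ depends only on $C_{\mu_n},\dots,C_{\mu_n}^{(i)}$, completing the induction.

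The computation is routine; the only point requiring care is the degree bookkeeping at the inductive step — in particular, confirming that the remainder $G_{i+1}$ never reaches degree $i+1$, which is exactly where one uses that $G_i$ has degree strictly less than $i$. I do not foresee any real obstacle beyond keeping track of which quantities are polynomials in $n$ and which are functions of $z$.
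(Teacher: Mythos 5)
Your proof is correct and follows essentially the same route as the paper: the first identity is the logarithmic-derivative/partial-fraction computation, and the second is an induction on $i$ using the recursion $h_{i+1}=h_i'+h_i\,(p_n'/p_n)$, which is exactly the identity $\frac{d}{dz}\bigl(p_n^{(i)}/p_n\bigr)=p_n^{(i+1)}/p_n-(p_n^{(i)}/p_n)(p_n'/p_n)$ the paper uses. Your explicit degree bookkeeping in $n$ and tracking of the order of derivatives of $C_{\mu_n}$ is a welcome elaboration of what the paper leaves implicit.
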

\begin{proof}  The first part is Cauchy's theorem. The second follows by induction, starting with $C_{\mu_{n}}(z)=\frac{p_n'}{np_n}, $ and then using
 $$\frac{d}{dz}\bigg(\frac{p_n^{(i)}}{p_n}\bigg)=\frac{p_n^{(i+1)}}{p_n}- \frac{p_n^{(i)}}{p_n}\frac{p_n'}{p_n}.$$
 \end{proof}

 \begin{proof}[ Proof of the theorem.]
     The last part of the theorem, the existence of subsequences which are weakly convergent follows from the fact, proved in the main theorem of \cite{BBS}, that polynomial eigenfunctions to pencils of general type have zero-sets contained in a compact set that is independent of $n$. The results in loc.cit. are formulated for the case of a homogeneous pencil,  but the proof works in our situation, mutatis mutandis, and gives a result that only depends on the associated homogenized spectral pencil. 
     
     Part (i) is an immediate consequence of the definition of weak convergence of measures. Part (ii) follows from Proposition 2 in \cite{BBSc},(corrected version of proposition $3$ in \cite{BBS}),  noting the fact that the characteristic equation of the pencil has  highest coefficient $\prod\alpha_i\neq 0$ and constant term $1$. We then only need to identify the algebraic equation in question. We will at the same time give a heuristic illustration of the proof in \cite{BBS} using a stronger condition than in      
\cite{BBS,BBSc}.

    In the previous section we have seen that $p_n(z)$ satisfies the differential equation ${}_AT_Bp_n(z)=0$. That is
    $$z^B(1-z)p_n^{(B+1)}+\sum_{k=1}^{B}(e_k^{\beta}-e_k^{\al}z)n^{B+1-k}z^{k-1}p_n^{(k)}+e_{B+1}^\al n^{B+1}p_n=0.$$
    Dividing by $p_n$ we get
    $$z^{B}(1-z)\frac{p_n^{(B+1)}}{p_n}+\sum_{k=1}^{B}(e_k^{\beta}-e_k^{\al}z)z^{k-1}n^{B+1-k}\frac{p_n^{(k)}}{p_n}+n^{B+1}e_{B+1}^\al =0$$
    From equation \eqref{cau3} we can then write this as as
    $$z^{B}(1-z)(n^{B+1}C_{\mu_n}^{B+1}+G_{B+1})+\sum_{k=1}^{B}(e_k^{\beta}-e_k^{\al}z)z^{k-1}(n^{B+1}C_{\mu_n}^k+G_k)
    +n^{B+1} e_{B+1}^\al = 0, $$
    where $G_k,\ k=1,...B+1$ involves lower order terms in $n$ and derivatives of $C_{\mu_n}.$
    Now assume  that $C_{\mu_n}$ and all its derivatives of order $B$ are bounded(this is the strong condition, alluded to above, which is circumvented in the quoted sources). Divide the equation by $n^{B+1}$, and as $n\to \infty$ we then obtain
    $$z^B(1-z)C_{\mu}^{B+1}+\sum_{k=1}^{B}(e_k^{\beta}-e_k^{\al}z)z^{k-1}C_{\mu}^k +e_{B+1}^\al =0. $$
    Multiplying the above equation by $z$ we get
    $$\big(zC_\mu\big)^{B+1}-z\big(zC_\mu\big)^{B+1}+\sum_{k=1}^{B}e_k^\beta \big(zC_\mu\big)^k-z\sum_{k=1}^{B}e_k^\al \big(zC_\mu\big)^k+ze_{B+1}^\al=0.$$
    Thus
    $$\big(zC_\mu\big)^{B+1}+\sum_{k=1}^{B}e_k^\beta \big(zC_\mu\big)^k -z\bigg(\big(zC_\mu\big)^{B+1}+\sum_{k=1}^{B}e_k^\al \big(zC_\mu\big)^k+e_{B+1}^\al\bigg)=0.$$
    Setting $w=C_\mu$, this can be written as
\begin{equation}
\label{eq:alg2}
zw\prod_{i=1}^{B}(zw+\beta_i)-z\prod_{i=1}^{A}(zw+\alpha_i)=0,
\end{equation}
which by dividing by $-z$ gives the equation in the theorem.
    \end{proof}

 In general \eqref{eq:alg} defines $w$ as an algebraic function of $z$(of a quite special kind, by the way). A very natural question here is to ask how one can  choose the parameters $a_i$ and $b_i$ so that $M(zw)$ and $N(zw)$ have common roots, since then the algebraic equation has rational solutions. We use the notation of the theorem.

 \begin{proposition}
   \label{prop3}
Let $p_n(z) = {}_{A}\text{F}_{B}\left[ (a)_A ;\\ (b)_B;z \right].$ Assume that $\beta_i=\alpha_{i+1}$ for  $1\leq i\leq B$. Then \eqref{eq:alg} has the solutions 
$$w=f_1(z)=\frac{1}{z-1},\ w=f_i(z)=\frac{-\alpha_i}{z},\ i=2,...,A.$$  
\end{proposition}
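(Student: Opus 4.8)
The plan is to use the explicit product form of \eqref{eq:alg} together with the two structural facts available here: that $a_1=-n$ forces $\alpha_1=-1$, and that the hypothesis $\beta_i=\alpha_{i+1}$ (together with $A=B+1$) makes the denominator product literally a factor of the numerator product.

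First I would substitute $\beta_i=\alpha_{i+1}$, $i=1,\ldots,B$, into \eqref{eq:alg}. Since $A=B+1$, this turns $N(zw)=\prod_{i=1}^{B}(zw+\beta_i)$ into $\prod_{i=2}^{A}(zw+\alpha_i)$, so that
\[
A(z,w)=\prod_{i=1}^{A}(zw+\alpha_i)-w\prod_{i=2}^{A}(zw+\alpha_i)=\bigl((zw+\alpha_1)-w\bigr)\prod_{i=2}^{A}(zw+\alpha_i).
\]
Next I would insert $\alpha_1=-1$ into the first factor, getting $(zw+\alpha_1)-w=(z-1)w-1$, whence
\[
A(z,w)=\bigl((z-1)w-1\bigr)\prod_{i=2}^{A}(zw+\alpha_i).
\]
Then $A(z,w)=0$ holds precisely when $(z-1)w-1=0$, i.e.\ $w=f_1(z)=\frac{1}{z-1}$, or when $zw+\alpha_i=0$ for some $i\in\{2,\ldots,A\}$, i.e.\ $w=f_i(z)=\frac{-\alpha_i}{z}$. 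This is exactly the list of solutions claimed.

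There is essentially no obstacle here beyond bookkeeping with the indices; the real content is the observation that the hypothesis degenerates the algebraic curve $A(z,w)=0$ into a union of $A$ rational branches. If desired, one can also remark that these branches exhaust the solutions: for fixed generic $z$ the polynomial $w\mapsto A(z,w)$ has degree $A$ in $w$ (leading coefficient $z^{B}(z-1)$), and the $A$ branches exhibited above are distinct, so the displayed factorization is the complete one.
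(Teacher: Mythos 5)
Your proposal is correct and follows essentially the same route as the paper: substitute $\beta_i=\alpha_{i+1}$, use $\alpha_1=-1$, factor out $\prod_{i=2}^{A}(zw+\alpha_i)$, and read off the branches (the paper does this with the equivalent form \eqref{eq:alg2}, which differs from \eqref{eq:alg} only by the factor $-z$). Your closing remark that the $A$ exhibited branches exhaust the solutions, via the degree-$A$ leading coefficient $z^{B}(z-1)$, is a small addition not in the paper but harmless and correct.
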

\begin{proof} 
Under the assumption in the proposition, \eqref{eq:alg2} becomes
$$A(x,w)=zw(zw+\al_2)\ldots(zw+\al_A)-z(zw-1)(zw+\al_2)\ldots(zw+\al_A)=0,$$
This implies
 $$(zw+\al_2)\ldots(zw+\al_A)(zw(1-z)+z)=0,$$
 which clearly has the given solutions.
  \end{proof}
  \subsection{The asymptotic measure}
  Now we want to use the preceding results on the Cauchy transform to get information on the support of the possible limits of root counting measures, as in the Theorem in the preceding section. The key is to consider the logarithmic potential of $\mu$.
  
  \begin{definition} If $\mu$ is a probability measure, 
  the logarithmic potential is defined as the convolution
  $$
  L_\mu(z):=\int\log\vert z-\zeta\vert d\mu(\zeta).
  $$
  \end{definition}
 
 It will be  a subharmonic $ L_{loc}^1$ -function, harmonic outside the support of $\mu$. Furthermore $\partial L_\mu(z)\partial z=C_\mu(z)$. The fact that $C_\mu(z)$ satisfies an algebraic equation, then forces strong restrictions on the behavior of $L_\mu$.
 
 Let $U\subset\bC\setminus \{0,1\}$  be simply-connected open set, and $p\in U$. Consider the algebraic equation $A(z,w)=0$, belonging to the situation in Theorem \ref{cauchy}. Outside the branch points this equation defines an algebraic function with $A$ branches $w=f_i(z)$. To them we may associate real harmonic functions ${H_i}$, defined by 
 $$H_i(z)=Re\bigg[\int_{p}^{z}f_i(s)ds\bigg] \qquad z\in U, 1\leq i\leq A.$$
In this situation the following result is an immediate consequence of the main result of \cite{BB,BBB} (see also \cite{BR1}) on subharmonic functions whose derivative satisfies an algebraic equation.
 \begin{theorem}
 \label{levelsets0}
 If $\lim \mu_{n_i}=\mu$ for a subsequence of root counting measures $\mu_n$, then $\mu$ will have  support in $U$ on a finite union $K$ of parts of  level curves to the differences
 $H_i-H_j$ where $1\leq i\neq j\leq A.$ Furthermore in any sufficiently small disk $D$ that only contains points from the level curve  $H_i=H_j+C$.  one has
 $L_\mu(z)=\text{Max}\{ H_i(z),H_j(z)+C\}+D$, for some $D\in \mathbb C$(that only depends on $U$).
 \end{theorem}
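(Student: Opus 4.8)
The statement is presented as an immediate consequence of the structure theory of \cite{BB,BBB} (see also \cite{BR1}) for subharmonic functions whose $\partial$-derivative is algebraic, so the plan is to put $L_\mu$ into the scope of that theory and then translate. First I would collect the relevant facts. By Theorem~\ref{cauchy} the weak limit $\mu$ is a finite, compactly supported measure — compactness of the supports being uniform in $n$, as recalled in the proof of that theorem — its logarithmic potential $L_\mu$ is a subharmonic $L^1_{loc}$-function, harmonic on $\bC\setminus\mathrm{Supp}\,\mu$, with $\partial L_\mu/\partial z=C_\mu$, and $w=C_\mu(z)$ satisfies the polynomial equation $A(z,w)=0$ of \eqref{eq:alg}. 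The coefficient of $w^A$ in $A(z,w)$ is $z^B(z-1)$ — exactly the coefficient of $p_n^{(B+1)}$ found earlier, using $A=B+1$ — which is nonzero on $U\subset\bC\setminus\{0,1\}$; discarding in addition the finitely many branch points of the projection of $\{A(z,w)=0\}$ to the $z$-line, the equation has on $U$ exactly $A$ holomorphic solutions $w=f_i(z)$, and since $U$ is simply connected the functions $H_i(z)=\mathrm{Re}\int_p^z f_i(s)\,ds$ are single-valued and harmonic on $U$, as in the discussion preceding the theorem.

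Next I would localize away from the support. On any connected component $V$ of $U\setminus\mathrm{Supp}\,\mu$ the function $C_\mu$ is holomorphic and solves $A(z,C_\mu(z))=0$, so by the identity theorem it coincides throughout $V$ with one fixed branch $f_{i(V)}$; consequently $L_\mu-H_{i(V)}$ is a real harmonic function on $V$ whose $\partial/\partial z$-derivative vanishes (up to the normalization relating $C_\mu$ to the $f_i$), hence it is constant: $L_\mu=H_{i(V)}+c_V$ on $V$. Thus, off its support, $L_\mu$ is piecewise a branch $H_i$ shifted by a constant, the constants being tied together by continuity of $L_\mu$ across $\mathrm{Supp}\,\mu$ and by the single global primitive used to define the $H_i$.

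At this point the main theorem of \cite{BB,BBB} applies: a subharmonic function with algebraic $\partial$-derivative, and hence admitting such a local description, is near each point of $U$ the pointwise maximum of finitely many of the $H_i+c_i$, and its Riesz measure — which is $\mu$ up to the factor $2\pi$, $L_\mu$ being the logarithmic potential of $\mu$ — is carried by the locus where this maximum is attained by at least two indices. For $i\neq j$ the branches $f_i,f_j$ are distinct, so $H_i-H_j$ is nonconstant and $\{H_i=H_j+\mathrm{const}\}$ is locally a real-analytic arc; since $\mathrm{Supp}\,\mu$ is compact, $\mathrm{Supp}\,\mu\cap U$ is a finite union $K$ of parts of such level curves. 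In a small disk $D\subset U$ meeting only the single level curve $\{H_i=H_j+C\}$ and no branch point, only the two branches $H_i$ and $H_j+C$ are active, so there $L_\mu=\mathrm{Max}\{H_i,H_j+C\}+D$ with $D$ the additive constant inherited from $U$, which is the asserted formula.

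The real work, and the only delicate point, is the invocation of \cite{BB,BBB}: one must check that the algebraic function cut out by $A(z,w)=0$ meets the hypotheses of the structure theorem there — this is precisely why the \emph{general type} condition, modelled on \cite[Prop.~5]{BBS}, was imposed, guaranteeing simple characteristic roots and the correct branch behaviour — and one must separately control the excluded set: the branch points, the punctures $z=0,1$ where $A(z,w)=0$ degenerates, and, for any assertion beyond the fixed simply-connected $U$, the point at infinity. Once this admissibility is established, the remainder is the dictionary between ``$C_\mu$ is algebraic'' and the max-plus description of the potential, which is exactly the content of the cited papers.
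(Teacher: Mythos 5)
Your proposal is correct and follows essentially the same route as the paper: the paper likewise derives the theorem directly from the main structure theorem of \cite{BB,JER1} on subharmonic functions whose $\partial$-derivative satisfies an algebraic equation, after noting that $L_\mu$ is subharmonic with $\partial L_\mu/\partial z$ equal (up to normalization) to the algebraic function $C_\mu$ from Theorem \ref{cauchy}. The additional details you supply (identification of $C_\mu$ with a single branch off the support, the leading coefficient $z^B(z-1)$, and the admissibility check tied to the general type condition) are consistent with, and merely flesh out, the paper's brief argument.
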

  
Now we return to the degenerate situation, when the solutions to the algebraic equations are rational functions.  
Let $U\subset\bC\setminus \{0,1\}$  be simply-connected open set, and $p\in U$. Then the harmonic functions ${H_i},$ are $$H_i(z)=Re\bigg[\int_{p}^{z}f_i(s)ds\bigg] \qquad z\in U, 1\leq i\leq A.$$
Letting $\al_i =\al_i^x + \al_i^y i$ we obtain
\begin{eqnarray*}
\label{harm3}
H_1(z)&=\log|1-z|-\log |1-p|,\\
H_i(z)&=-\al_i^x \log |z|+ \al_i^{y} Arg z + \al_i^x\log |p|-\al_i^y Arg p ,
&\qquad 2\leq i\leq A
\end{eqnarray*}

By \ref{levelsets0} the asymptotic measure $\mu$ will have  support in $U$ on a finite union of parts of  level curves to the differences
 $H_i-H_j$ where $1\leq i\neq j\leq A.$ 
  \begin{theorem}
  \label{levelsets}
 Under the conditions of Proposition \ref{prop3}, a limit $\mu $ of a convergent subsequence of root counting measures will have support consisting of a union of parts of level curves  to the functions
 \begin{equation}\label{harm2}
 (\al^x_j-\al^x_i)\log|z| +(\al^y_j-\al^y_i)Arg z,\quad  2\leq i\neq j \leq A,
\end{equation}
and
  \begin{equation}\label{harm1}
   \log|1-z|+\al_i\log|z|-\al^y_iArg z, \quad\text{ for }2\leq i\leq A .
 \end{equation}
\end{theorem}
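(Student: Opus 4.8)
The plan is to obtain Theorem \ref{levelsets} as a direct specialization of Theorem \ref{levelsets0} using the explicit branches supplied by Proposition \ref{prop3}. Under the hypothesis $\beta_i=\alpha_{i+1}$, Proposition \ref{prop3} shows that the algebraic curve \eqref{eq:alg} degenerates completely, its $A$ branches being the rational functions $f_1(z)=1/(z-1)$ and $f_i(z)=-\alpha_i/z$ for $2\le i\le A$. These are single-valued on any simply-connected $U\subset\mathbb{C}\setminus\{0,1\}$, so the associated harmonic functions $H_i(z)=\operatorname{Re}\int_p^z f_i(s)\,ds$ can be integrated in closed form; this is exactly the computation recorded just before the statement, yielding $H_1(z)=\log|1-z|$ plus a constant and $H_i(z)=-\alpha_i^x\log|z|+\alpha_i^y\operatorname{Arg} z$ plus a constant for $i\ge 2$, the constants depending only on $p$ and the branch of $\operatorname{Arg}$ fixed on $U$.

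With this in hand I would invoke Theorem \ref{levelsets0}: inside $U$, any weak limit $\mu$ of a convergent subsequence of the root counting measures is supported on a finite union of parts of the level curves of the pairwise differences $H_i-H_j$, $1\le i\ne j\le A$. It then remains only to list these differences. For $2\le i\ne j\le A$ the $\log|1-z|$ terms are absent and $H_i-H_j$ equals, up to an additive constant, a linear combination of $\log|z|$ and $\operatorname{Arg} z$ with coefficients built from the real and imaginary parts of $\alpha_i$ and $\alpha_j$; ranging over all such ordered pairs this gives precisely the family \eqref{harm2} (the sign of the $\operatorname{Arg}$-coefficient being immaterial since both orders occur). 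For a pair $(1,i)$ with $2\le i\le A$ one gets $H_1-H_i=\log|1-z|+\alpha_i^x\log|z|-\alpha_i^y\operatorname{Arg} z$ up to a constant, which is the family \eqref{harm1}. Since every pair $(i,j)$ is of one of these two types, the support of $\mu$ in $U$ lies on a union of parts of the curves \eqref{harm2} and \eqref{harm1}, and letting $U$ range over a cover of $\mathbb{C}\setminus\{0,1\}$ (and using that $\operatorname{Supp}\mu$ is closed) gives the global statement.

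There is no serious obstacle here beyond bookkeeping; the one point requiring a line of care is the multivaluedness. The functions $\log|z|$ and $\log|1-z|$ extend as harmonic functions to $\mathbb{C}\setminus\{0\}$ and $\mathbb{C}\setminus\{1\}$ respectively, but $\operatorname{Arg} z$ is only defined after cutting the plane, and the additive constants in $H_i$, hence in the level-curve equations $H_i-H_j=C$, jump across such a cut. I would note that a level curve of $H_i-H_j$ on $U$ is, modulo this locally constant ambiguity, a piece of a level curve of the corresponding function in \eqref{harm2} or \eqref{harm1}, which is itself globally well-defined up to the same ambiguity, and that the excluded points $0$ and $1$ are exactly those already excluded in Theorem \ref{levelsets0}. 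No analytic input beyond Theorems \ref{cauchy} and \ref{levelsets0} and Proposition \ref{prop3} enters.
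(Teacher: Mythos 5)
Your proposal is correct and follows essentially the same route as the paper: compute the explicit harmonic functions $H_1(z)=\log|1-z|$ and $H_i(z)=-\al_i^x\log|z|+\al_i^y\,Arg\,z$ (up to constants) from the rational branches of Proposition \ref{prop3}, and then apply Theorem \ref{levelsets0} to place the support on level curves of the pairwise differences. Your bookkeeping of the $Arg$-branch ambiguity is a welcome extra precision (and your coefficient $\al_i^x$ in \eqref{harm1} is what the paper intends, where $\al_i$ appears as an evident typo), but the argument is the paper's own.
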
 
Note that if  the hypergeometric pencil is of general type  (i.e. $-\alpha_i\notin ]-\infty,1], \ i=2,...,A$)
we know that there exist convergent subsequences, by a compactness argument. Also note that it is possible to recover $\mu$ as the Laplacian of $L_\mu$(see loc.cit).The above result is {\it local}, in the sense that we do not know which curves occur in the curve complex. We now want to explore a possible {\it global} descriptions of the support.

  The branch points of the curve defined by $A(z,w)=0$ are exactly the points where $ f_i(z)=f_1(z)\iff z=p_i=\frac{\al_i}{\al_i+1}.$ Through each $p_i$ there is a unique level curve $H_1(z)=H_i(z)+(H_1(p_i)-H_i(p_i))$.
 Now define $\tilde H_i(z)=H_i(z)+(H_1(p_i)-H_i(p_i))$, so that the level curve through $p_i$ is given by $H_1(z)=\tilde H_i(z)$.


\begin{example} Let
$p_n(z)={}_{2}\text{F}_{1}\left[ \begin{array}{c} -n, kn+1, \\ kn+2 \end{array} ;
 \begin{array}{cc} z \end{array}\right]$ for $k>0$, and let $\mu_n$ be the root measure of $p_n$ and assume $\mu_n \to \mu$ \text{as} $n \to \infty$(possibly for a subsequence). Then $p_n$ satisfies the equation
 $$(\bigtriangleup+kn+1)(\bigtriangleup-z(\bigtriangleup-n))p_n=0.$$
 Letting $n\to\infty$ we have the equation for the Cauchy transform
 $$A(z,w)=(zw+k)(zw-z(zw-1)=0,$$
 with solutions
 $$w=\frac{-k}{z} \qquad or \qquad w=\frac{1}{z-1}.$$
 The branch point will be $p=\frac{k}{k+1}$, and
hence $H_1(z)=\log |1-z|$ and $\tilde H_2(z)=-k\log |z|+(log  \frac{k^k}{(k+1)^{k+1}}.$ Hence the level curve to $H_1-\tilde H_2=0$ through the branch point becomes  
 becomes the lemniscate
 $$|z^k(1-z)|= \frac{k^k}{(k+1)^{k+1}}.$$ In this case there are strong asymptotic results, using the integral representation of hypergeometric functions, by  Duren and Guillou \cite{LDJG}, that show
  that the zeros of the polynomial $p_n$ cluster on the  loop of lemniscate where $Rez>\frac{k}{k+1}$.
 \end{example}
 Based on this, Theorem  \ref{levelsets} and experimental evidence(see next section) we conjecture that the zeros cluster along the level curve of $H_1-H_2$ through the branch point even for more general $k.$
 \begin{conjecture}
 Let $\al = \eta + i \zeta $  where  $\eta > 0$ and $\zeta \neq 0$. The zeros of the hypergeometric polynomials
 $$p_n(z)={}_{2}\text{F}_{1}\left[ \begin{array}{c} -n, \al n + 1, \\ \al n+2 \end{array} ;
 \begin{array}{cc} z \end{array}\right],$$ asymptotically cluster on the  loop of leminiscate
 \begin{equation*}\label{lami}
 |z^\eta(1-z)|e^{-\zeta Arg z}=\frac{|\al|^\eta}{{|\al+1|}^{\eta+1}}e^{\zeta Arg{\frac{\al}{\al+1}}}\text{ for }Re(z)>\frac{\eta}{\eta+1}.\end{equation*}

 \end{conjecture}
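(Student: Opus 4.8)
The plan is to follow the complex-analytic strategy used by Duren and Guillou \cite{LDJG} for the real case, but carried through with the argument of the complex parameter $\al$ kept explicit, and to combine it with the level-curve description already established in Theorem \ref{levelsets}. First I would write down the Euler-type integral representation of ${}_2\mathrm{F}_1(-n,\al n+1;\al n+2;z)$. Since $\al n+2-(\al n+1)=1>0$ (after normalizing so $\operatorname{Re}\al>0$, $\al n+1$ has positive real part for large $n$), one has
\begin{equation*}
p_n(z)=(\al n+1)\int_0^1 t^{\al n}(1-tz)^{n}\,dt,
\end{equation*}
up to an $n$-dependent constant that does not affect the zeros. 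Thus $p_n(z)=0$ is equivalent to the vanishing of $\int_0^1 e^{n\varphi_z(t)}\,dt$ where $\varphi_z(t)=\al\log t+\log(1-tz)$, an entire family of Laplace-type integrals with a parameter-dependent holomorphic phase.

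The next step is a steepest-descent / saddle-point analysis of this integral. The saddle point $t_0=t_0(z)$ solves $\varphi_z'(t)=0$, i.e. $\al/t = z/(1-tz)$, giving $t_0=\al/\big(z(\al+1)\big)$; one checks the saddle lies on (or can be connected to) the contour $[0,1]$ precisely in the region $\operatorname{Re} z>\eta/(\eta+1)$, which is why the loop of the lemniscate lives there. Away from the endpoints, the leading asymptotics of the integral is governed by $e^{n\varphi_z(t_0)}$ plus the two endpoint contributions $e^{n\varphi_z(1)}=(1-z)^n$ and the contribution near $t=0$ (which is subdominant because $\operatorname{Re}\al>0$). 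The zeros of $p_n$ then occur asymptotically where two competing exponential contributions balance in modulus, i.e. where $\operatorname{Re}\varphi_z(t_0)=\operatorname{Re}\varphi_z(1)$, and this equality is exactly
\begin{equation*}
\eta\log|z|-\zeta\operatorname{Arg} z+\log|1-z| = \eta\log|\al|-\eta\log|\al+1|-\log|\al+1|+\zeta\operatorname{Arg}\tfrac{\al}{\al+1},
\end{equation*}
which rearranges into the stated lemniscate $|z^\eta(1-z)|e^{-\zeta\operatorname{Arg} z}=|\al|^\eta|\al+1|^{-(\eta+1)}e^{\zeta\operatorname{Arg}(\al/(\al+1))}$. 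This matches the level curve $H_1=\tilde H_2$ through the branch point $p=\al/(\al+1)$ produced by Theorem \ref{levelsets}, so the algebraic-curve picture and the integral picture agree; that agreement is what upgrades the local statement to the global one. To show every point of the loop is a genuine cluster point of zeros (not just that zeros avoid the rest), I would argue as in \cite{LDJG}: on the balance curve the two dominant terms have equal modulus and their ratio $e^{n(\varphi_z(t_0)-\varphi_z(1))}$ winds around the unit circle as $n$ varies, forcing a zero in every small disc, while off the curve one term strictly dominates and $p_n$ is nonvanishing for large $n$.

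The main obstacle is the careful deformation of the contour $[0,1]$ through the complex saddle $t_0(z)$ while controlling the branch of $\log t$ and of $\log(1-tz)$ uniformly for $z$ in a neighborhood of the loop and for the (complex) parameter $\al$. In the real case of \cite{LDJG} the phase is real on $[0,1]$ and the steepest-descent paths are easy to locate; with $\al$ genuinely complex one must show that the relevant Lefschetz thimble through $t_0$ can be homotoped to $[0,1]$ without crossing the singularities $t=0$ and $t=1/z$, and that the endpoint at $t=0$ stays subdominant (this uses $\eta>0$) and the endpoint at $t=1$ contributes exactly $(1-z)^n$. One also has to handle the boundary of the region $\operatorname{Re} z=\eta/(\eta+1)$, where the saddle collides with the contour — a coalescing-saddle (Airy-type) regime — to confirm the loop closes up there and the zeros do not spill outside. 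Once this uniform saddle-point estimate is in place, the identification of the cluster set with the stated lemniscate and the "every point is a limit point" assertion follow by the winding argument sketched above, exactly paralleling \cites{KDPD1,LDJG,KBD0}.
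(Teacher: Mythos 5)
The statement you are addressing is not proved in the paper at all: it is stated as a \emph{conjecture}, supported only by the level-curve heuristics of Theorem \ref{levelsets} and by the numerical plots in Example \ref{Ex2}, so there is no paper proof to compare against. Your proposal follows exactly the route the paper itself points to (the Euler integral $p_n(z)=(\al n+1)\int_0^1 t^{\al n}(1-tz)^n\,dt$ analysed by Laplace/steepest descent, as in Driver--Duren, Duren--Guillou and Boggs--Duren), and the saddle computation $t_0=\al/(z(\al+1))$ and the balance condition $\operatorname{Re}\varphi_z(t_0)=\operatorname{Re}\varphi_z(1)$ are the right skeleton. But as written it is a plan, not a proof: every genuinely hard step is named and then deferred. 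Specifically, (a) the uniform steepest-descent estimate for a phase with complex $\al$ --- deforming $[0,1]$ through the complex saddle without crossing $t=0$ and $t=1/z$, with branch control uniform for $z$ in a neighborhood of the loop --- is precisely what distinguishes this case from the real-parameter results in \cite{LDJG,KBD0} and is not carried out; (b) the coalescing-saddle regime near $\operatorname{Re}z=\eta/(\eta+1)$, needed to show the cluster set is only the loop and does not spill over, is only mentioned; (c) the claim that every point of the loop is a cluster point needs a quantitative two-term asymptotic with uniform error bounds plus a Rouch\'e/Hurwitz argument, not just the qualitative remark that the ratio of the two dominant terms winds. Until (a)--(c) are done, the conjecture remains exactly that.

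One concrete computational warning: if you evaluate $\operatorname{Re}\varphi_z(t_0)$ with $t_0z=\al/(\al+1)$ and $1-t_0z=1/(\al+1)$, you get
\begin{equation*}
\eta\log|z|-\zeta\operatorname{Arg}z+\log|1-z|
=\eta\log|\al|-(\eta+1)\log|\al+1|-\zeta\operatorname{Arg}\tfrac{\al}{\al+1},
\end{equation*}
i.e.\ the constant is $\frac{|\al|^{\eta}}{|\al+1|^{\eta+1}}e^{-\zeta\operatorname{Arg}(\al/(\al+1))}$, with a minus sign in the exponent; the same sign comes out of the paper's own level-curve formulas for $H_1$ and $\tilde H_2$ through the branch point $p=\al/(\al+1)$. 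Your displayed balance equation (and the conjecture as printed) carries the opposite sign, so either your rearrangement has an algebra slip or the stated lemniscate has a sign typo; this must be settled before any asymptotic analysis can be matched to the curve.
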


 \section{Some graphical examples and a general conjecture.}
 \begin{example} 
 \label{Ex2}The two pictures below show plots of the zeros of the hypergeometric polynomial
 $$p_n(z)={}_{2}\text{F}_{1}\left[ \begin{array}{c} -n, \al n + 1, \\ \al n+2 \end{array} ;
 \begin{array}{cc} z \end{array}\right]$$ and the loop of the lemniscate \eqref{lami} using Mathematica for different values of n and $\al$. The results support the preceding conjecture.
 \begin{figure}[H]
\begin{minipage}[b]{0.45\linewidth}
\centering
\includegraphics[width=\textwidth]{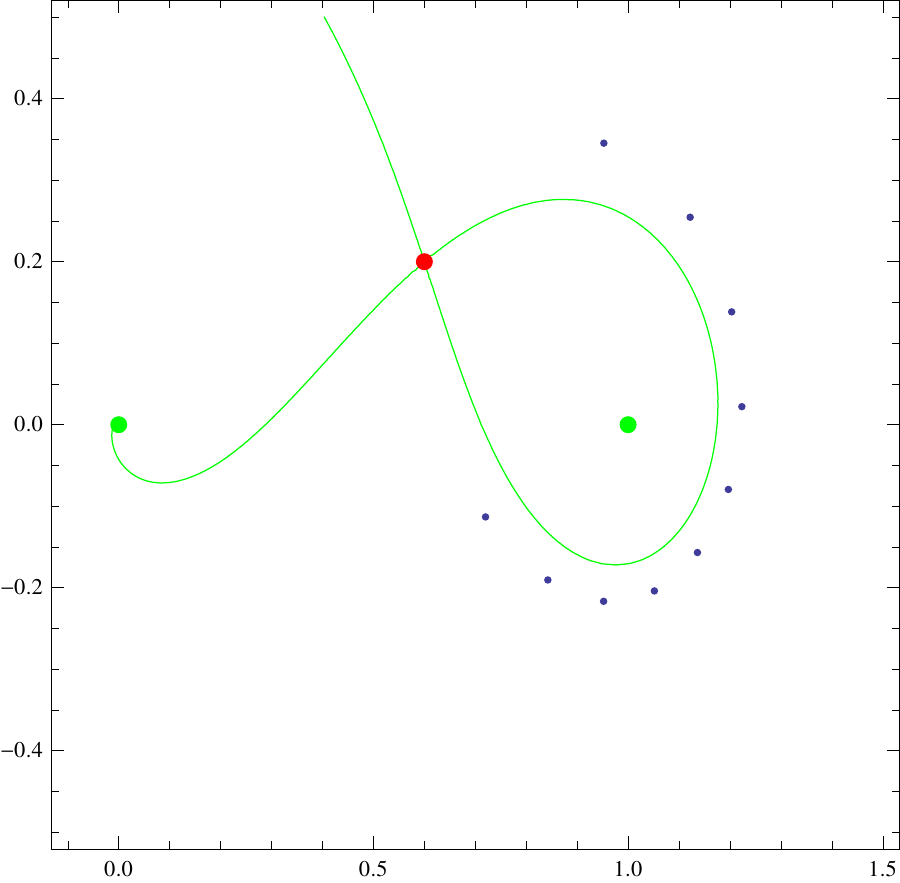}
\caption{$\al=\frac{1}{2}-i$ and n=10}
\label{fig:figure1}
\end{minipage}
\hspace{0.5cm}
\begin{minipage}[b]{0.45\linewidth}
\centering
\includegraphics[width=\textwidth]{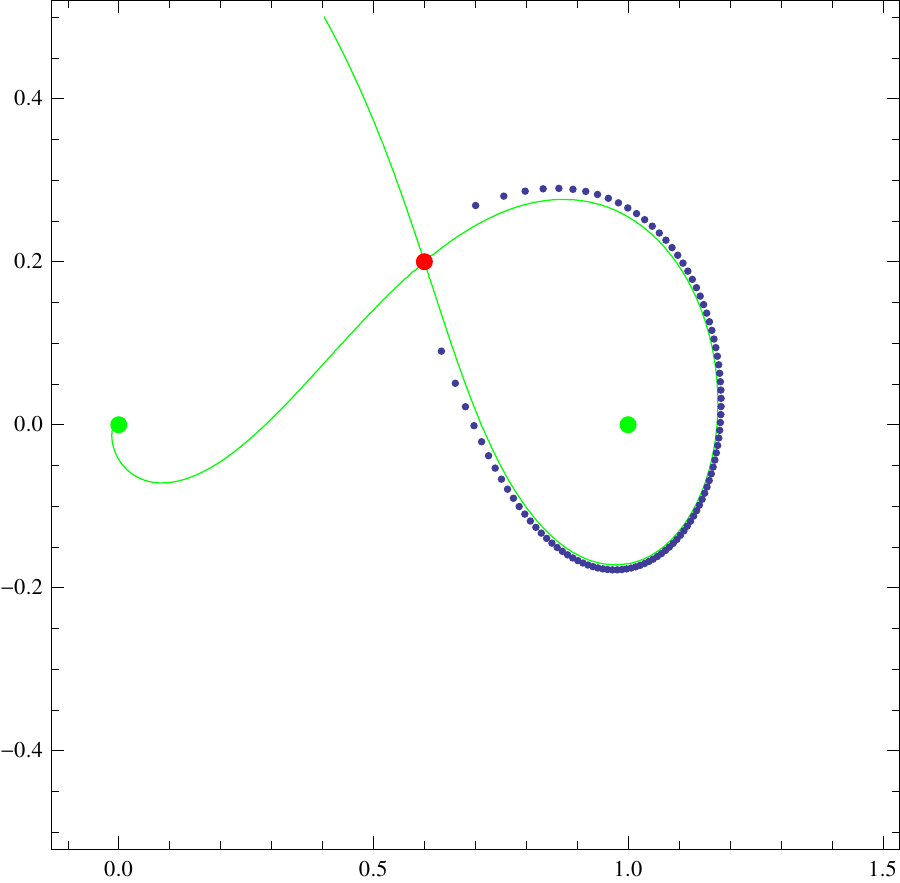}
\caption{$\al=\frac{1}{2}-i$ and $n=100$}
\label{fig:figure2}
\end{minipage}
\begin{minipage}[b]{0.45\linewidth}
\centering
\includegraphics[width=\textwidth]{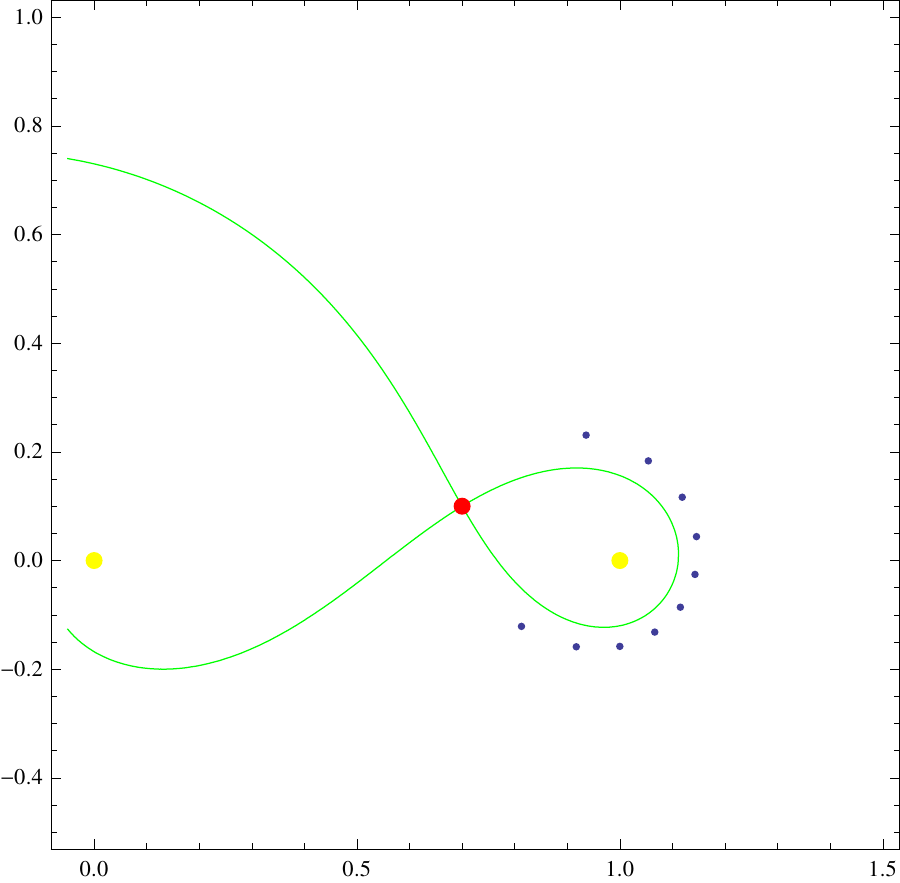}
\caption{$\al=2+i$ and n=10}
\label{fig:figure1}
\end{minipage}
\hspace{0.5cm}
\begin{minipage}[b]{0.45\linewidth}
\centering
\includegraphics[width=\textwidth]{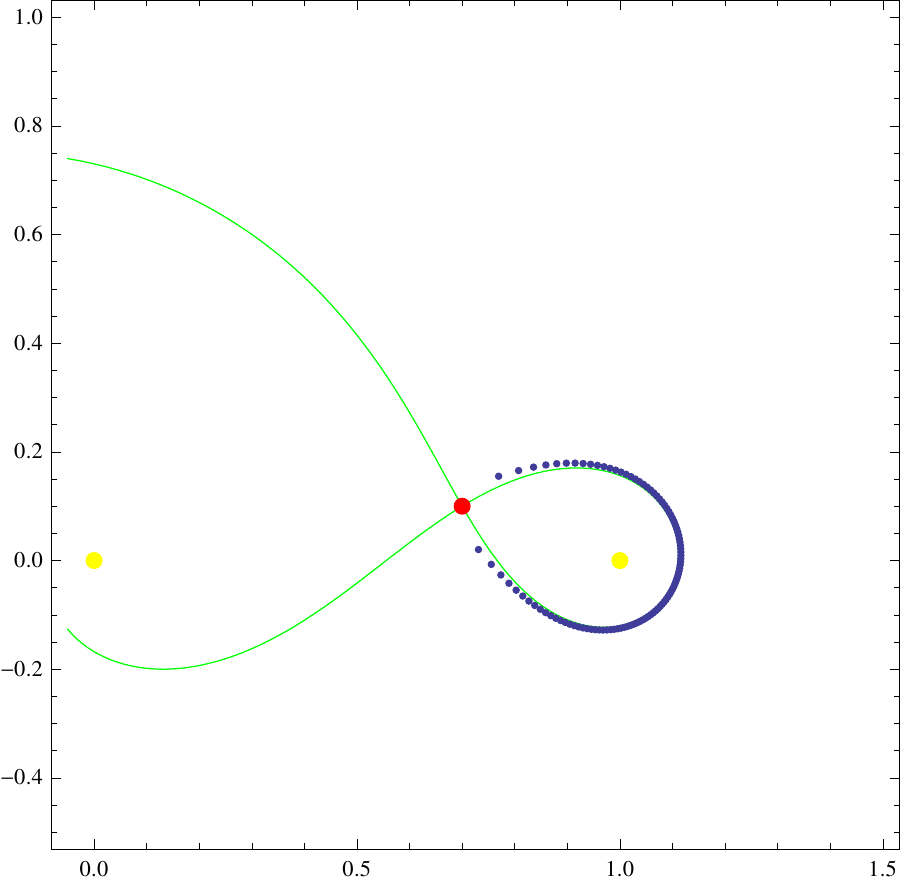}
\caption{$\al= 2+i$ and $n=100$}
\label{fig:figure2}
\end{minipage}
\end{figure}
 \end{example}
In the preceding pictures the two dots on the real axis are $0$ and $1$. Inside the lemniscate lies the point $1$. Given that $\mu$ has support on the lemniscate, we can use the theory in \cite{BB} to see that we must have $C(\mu)=-\alpha/z$ inside the lemniscate and $C(\mu)=1/(z-1)$ outside.  This follows since the logarithmic transform 
 $L(\mu)$ given by convolution of the asymptotic measure with $\log\vert z\vert$, is a subharmonic function, with a derivative $\frac{\partial L(\mu)}{\partial z}=C(\mu)$ that satisfies an algebraic equation. Thus locally in small disks around points on the lemniscate it is given by the maximum of $H_1$ and $\tilde H_2$(See Theorem 1 in \cite{BB}). But inside the lemniscate $\tilde H_2$ is the larger function, and the result on the Cauchy transform follows(in particular in the case of Example 1). Now note that we then actually had that the experimental data seems to indicate that there exists a region $D$ containing the support of $\mu$, such that $L(\mu)=Max\{ H_1,\tilde H_2\}$ in $D$. So the equality with the maximum of holds globally too, not only locally. In the next example we will first see that zeroes seem to cluster along level curves $H_1=\tilde H_i$ for some higher hypergeometric polynomials.

\begin{example}In this example we illustrate the level curves $H_1=\tilde{H_i}(z),\quad i=1,2$  and the zeros of
 $$p_n(z)={}_{3}\text{F}_{2}\left[ \begin{array}{c} -n, \al_1 n ,\al_2 n \\ \al_1 n+1,\al_2 n+1 \end{array} ;
 \begin{array}{cc} z \end{array}\right],$$  where  $Re(\al_i) >0$  for $i=1,2$ and $Im\al_1 \neq Im\al_2.$ Clearly the zeroes tend to cluster along parts of the level curves. The big dots are the branch points of the corresponding curve (and the singular points $0,1$).
 \begin{figure}[h!]
\begin{minipage}[b]{0.45\linewidth}
\centering
\includegraphics[width=\textwidth]{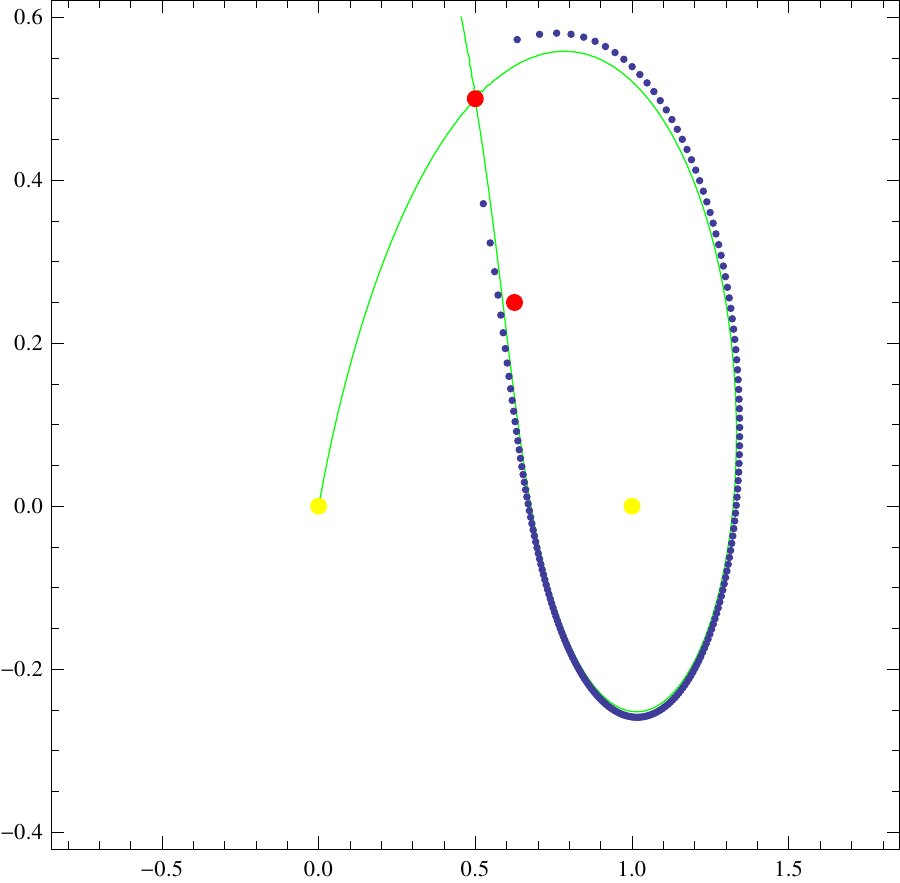}
\caption{$\al_1=i,\al_2=1+2i$ for $n=100$}
\label{fig:figure1}
\end{minipage}
\hspace{0.5cm}
\begin{minipage}[b]{0.45\linewidth}
\centering
\includegraphics[width=\textwidth]{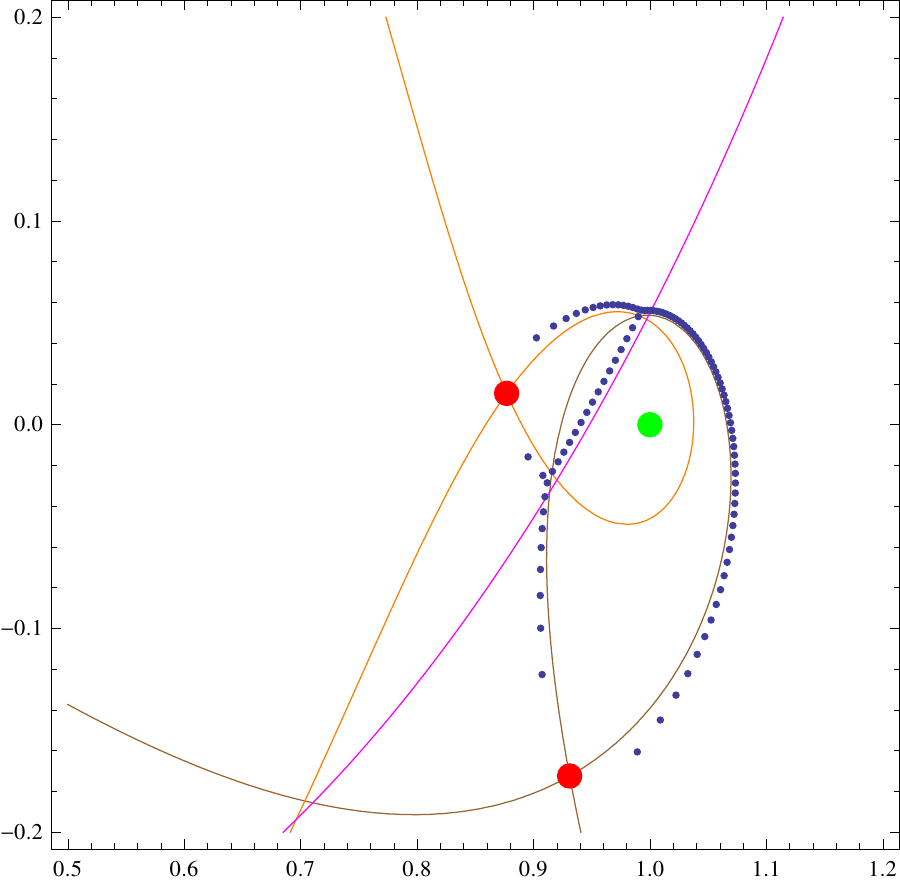}
\caption{$\al_1=1-5i,\al_2=7+i$ for $n=100$}
\label{fig:figure2}
\end{minipage}
\hspace{0.5cm}
\begin{minipage}[b]{0.45\linewidth}
\centering
\includegraphics[width=\textwidth]{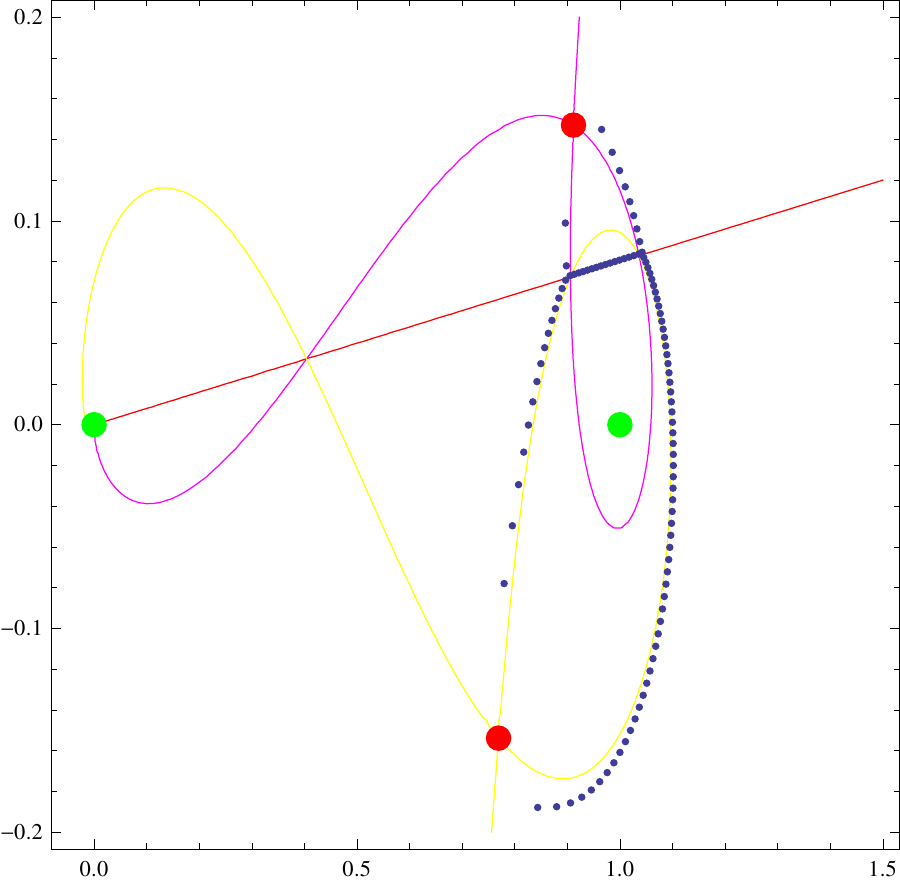}
\caption{$\al_1=2+5i,\al_2=2-2i$ for $n=100$}
\label{fig:figure3}
\end{minipage}

\end{figure}

 \end{example}

Define
$\varPsi(z)=Max\{\tilde{H_i}\}.$ It is a subharmonic, piecewise harmonic function, that is there exists $M_i$ that are open subset of $\bC$ who covers a.e such that $$\varPsi(z)=\tilde{H_i}(z) \quad\text{ on } \quad M_i.$$
Denote by $K$ the support of $\triangle \varPsi$. By piecewise harmonicity $K$ will be contained in the union of the level curves $\tilde{H_i}=\tilde{H_j},\ i\neq j.$ Our conjecture says that a subset of $K$ is the support of any asymptotic measure $\mu$, that comes from hypergeometric polynomials satisfying the conditions in Proposition \ref{prop3}. This would be a global result, saying which level curves will occur, in contrast to Theorem   \ref{levelsets}, which is a local statement, and only specifies that the support will be along some level curves.

  \begin{conjecture}There exists a connected and simply-connected domain  $D\subset \bC$, such that $K' :=K\cap D$ is the support of $\mu$, and $L_\mu=\varPsi$ in $D$. Moreover, $L_\mu=H_1$ outside $D$.
\end{conjecture}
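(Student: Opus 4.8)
The plan is to establish the conjecture by combining the local structure from Theorem \ref{levelsets} with a global potential-theoretic argument, in the spirit of the analysis carried out for Example \ref{Ex2}. First I would fix the algebraic function defined by \eqref{eq:alg} in the degenerate situation of Proposition \ref{prop3}, where the branches are the rational functions $f_1(z)=1/(z-1)$ and $f_i(z)=-\alpha_i/z$, $i=2,\dots,A$. The branch points of the corresponding curve are precisely the $p_i=\alpha_i/(\alpha_i+1)$ where $f_1=f_i$; note that the branches $f_i$ and $f_j$ for $i,j\ge 2$ never coincide, so the only branching is between $f_1$ and the various $f_i$. The candidate limit measure must be $\mu=\tfrac{1}{2\pi}\triangle\varPsi$ with $\varPsi(z)=\mathrm{Max}\{H_1(z),\tilde H_2(z),\dots,\tilde H_A(z)\}$, and the first task is to verify that this $\varPsi$ is well defined as a single-valued subharmonic function on a suitable domain $D$: the functions $\tilde H_i$ are only locally defined because $\mathrm{Arg}\,z$ is multivalued, so one must choose $D$ (connected, simply connected, avoiding $0$ and $1$) on which a branch of $\log z$ can be fixed, and check that the maximum is insensitive to that choice along $K$.

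Next I would identify $D$ and $K'=K\cap D$ explicitly. The natural choice for $D$ is the connected component of $\{z:\varPsi(z)>H_1(z)\}\cup(\text{the level complex})$ that contains the branch points $p_i$; equivalently, $D$ is bounded by the outermost arcs of the lemniscate-type curves $H_1=\tilde H_i$, and outside $D$ one has $\varPsi=H_1$ so that $L_\mu=H_1$ there, giving $C_\mu=f_1=1/(z-1)$ and hence $\mu$ supported on $\overline D$. Inside $D$, $\varPsi$ is the maximum of the remaining harmonic functions $\tilde H_i$ (with $H_1$ dominated), and $K'$ is the union of the arcs $\tilde H_i=\tilde H_j$ lying in $D$; since $1\in D$ and $\tilde H_i\to+\infty$ at $z=0$, the picture is that of finitely many arcs emanating from the branch points and meeting the point $z=1$ or $z=0$. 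The bulk of the work is to show $D$ is simply connected and that $\varPsi$ restricted to $D$ is genuinely piecewise-harmonic with the asserted level-curve skeleton — this is a combinatorial/topological statement about arrangements of the level curves \eqref{harm2}–\eqref{harm1}.

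With $D$ and $\varPsi$ in hand, the heart of the argument is to prove that the measure $\mu:=\tfrac{1}{2\pi}\triangle\varPsi|_D$ (extended by $\triangle H_1=0$ outside, so supported on $\overline{K'}$) is actually the weak limit of the root counting measures $\mu_n$. Here I would proceed as in \cite{BB,BBB,BR1}: one knows from Theorem \ref{cauchy} that along a subsequence $\mu_{n_i}\to\mu_\infty$ with $C_{\mu_\infty}$ satisfying \eqref{eq:alg}, and from Theorem \ref{levelsets} that $\mathrm{Supp}\,\mu_\infty$ lies on the union of level curves of the $H_i-H_j$. What remains is a \emph{selection} argument: among all admissible measures whose Cauchy transform solves \eqref{eq:alg} with the correct behavior at infinity ($C_\mu(z)\sim -1/z$), one must single out the one whose potential equals $\varPsi$. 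I would do this by showing $L_{\mu_\infty}=\varPsi$: the function $L_{\mu_\infty}-\varPsi$ is harmonic off the support, bounded, and tends to $0$ at infinity (both behave like $\log|z|$), hence vanishes by the maximum principle, \emph{provided} one first knows $\mathrm{Supp}\,\mu_\infty\subset\overline{K'}$ and that $L_{\mu_\infty}\le\varPsi$ everywhere — the latter because locally $L_{\mu_\infty}$ is a max of two of the $H_i$'s (Theorem \ref{levelsets}) and globally it cannot exceed the max of all of them once one controls which branch of $C_\mu$ is picked up in the unbounded component.

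The main obstacle — and the reason this is stated only as a conjecture — is precisely the global selection step: Theorems \ref{cauchy} and \ref{levelsets} are local and do not by themselves determine \emph{which} arcs of the level-curve complex carry mass, nor that the outermost branch is $f_1=1/(z-1)$ rather than some $f_i$; ruling out spurious components of the support (arcs of $\tilde H_i=\tilde H_j$ that lie in $D$ but are not ``activated'') requires an argument that the $\mu_n$ do not place zeros there, which the differential-equation method alone does not provide. In the classical case $A=B+1=2$ this is exactly what the delicate steepest-descent analysis of the Euler integral in \cite{LDJG,KBD0} supplies, and a complete proof of the conjecture would need either an analogous integral-representation asymptotic for general $A$, or a new uniqueness theorem for the inverse problem ``$C_\mu$ solves \eqref{eq:alg} and $\mathrm{Supp}\,\mu$ is a level-curve complex $\Rightarrow$ $\mu$ is this particular one.'' A reasonable partial result one could actually prove is that \emph{if} a convergent subsequence has its limit supported in $\overline D$ with $C_\mu=1/(z-1)$ outside $D$, \emph{then} $L_\mu=\varPsi$ in $D$ and the support is all of $K'$; the quantitative input needed to remove the hypothesis is the genuine hard part.
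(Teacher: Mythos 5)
This statement is a conjecture, and the paper offers no proof of it: the only support given is the graphical evidence of the region plots and the closing remark that the description of $L_\mu$ would follow from subharmonicity, as in Example \ref{Ex2}, \emph{if} the zeros continue to behave as in the pictures for large $n$. Your proposal does not prove it either, and to your credit you say so explicitly; what you do supply matches the paper's own reasoning rather closely. Your conditional partial result (if a limit measure is supported in $\overline D$ with the branch $f_1=1/(z-1)$ selected in the unbounded component, then $L_\mu=\varPsi$ on $D$ by combining Theorem \ref{levelsets} with the maximum principle) is essentially the paper's subharmonicity remark made precise, and your identification of the obstruction is exactly right: Theorems \ref{cauchy} and \ref{levelsets} are local, they do not determine which arcs of the level complex $K$ are ``activated'' nor which branch of \eqref{eq:alg} governs the unbounded component, and in the known case $A=2$ this global information comes from the steepest-descent analysis of the Euler integral in \cite{LDJG,KBD0}, not from the differential-equation method. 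Two small cautions on the sketch itself: the comparison at infinity gives $L_\mu-\varPsi\to$ a constant (both grow like $\log|z|$), so the maximum-principle step needs the normalizing constants in the $\tilde H_i$ (and the choice of base point $p$) handled explicitly rather than asserting decay to zero; and the single-valuedness of the $Arg\, z$ terms in \eqref{harm2}--\eqref{harm1} on the proposed $D$, which you flag, is a genuine issue when $\zeta=\operatorname{Im}\alpha_i\neq 0$ and deserves more than a parenthetical check. As a research plan your proposal is sound and consistent with the paper; as a proof it is incomplete for precisely the reason the statement remains a conjecture.
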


 The pictures below (that have the same parameters as the last two figures) illustrate this conjecture.
  There we have first drawn all level curves $H_1=\tilde H_2$, $H_1=\tilde H_3$ and $\tilde H_3=\tilde H_2$. Then the big points that are the branch points.  Then we have marked the regions where $\varPsi(z)$
 is equal to $H_1,\tilde H_2,\tilde H_3$ respectively(writing $\tilde H_3=H_3+C_3$, etc.) This means that $K$ is visible as the boundary, where we change from one color coded harmonic function $H_i$ to another. We see that in both pictures the zeroes cluster along a subset of $K$, which is what the conjecture predicts and so this gives some experimental validation. The description of $L_\mu$ in the conjecture would actually follow from subharmonicity, as in example \ref{Ex2}, if the zeroes continue to behave as in the picture for large $n$.
 \begin{figure}[h!]
\begin{minipage}[b]{0.45\linewidth}
\centering
\includegraphics[width=\textwidth]{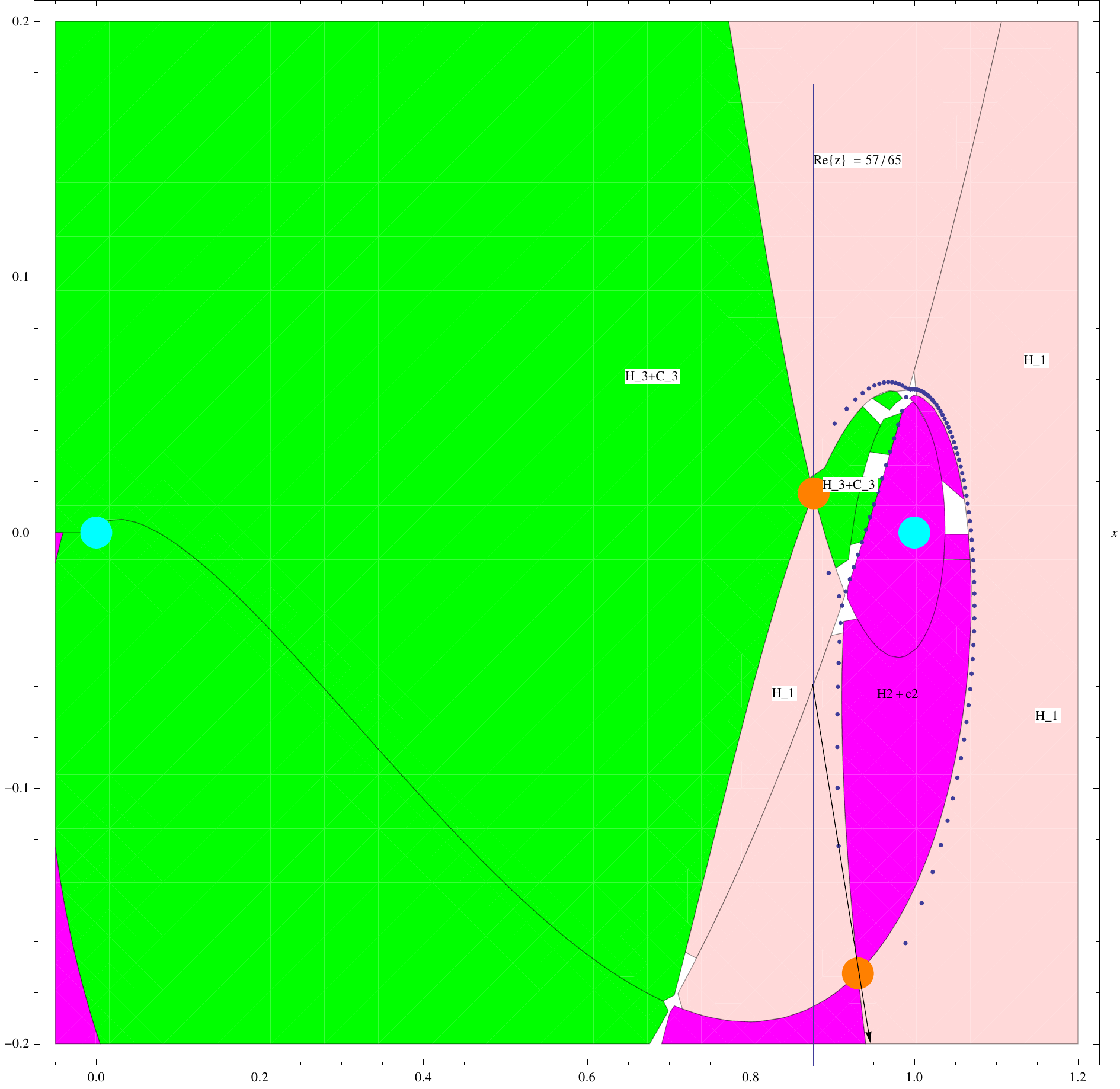}
\caption{Region plot of $\psi(z)$ and the zeros of $ p_n$ for $n=100$}
\label{fig:figure2}
\end{minipage}
\hspace{0.5cm}
\begin{minipage}[b]{0.45\linewidth}
\centering
\includegraphics[width=\textwidth]{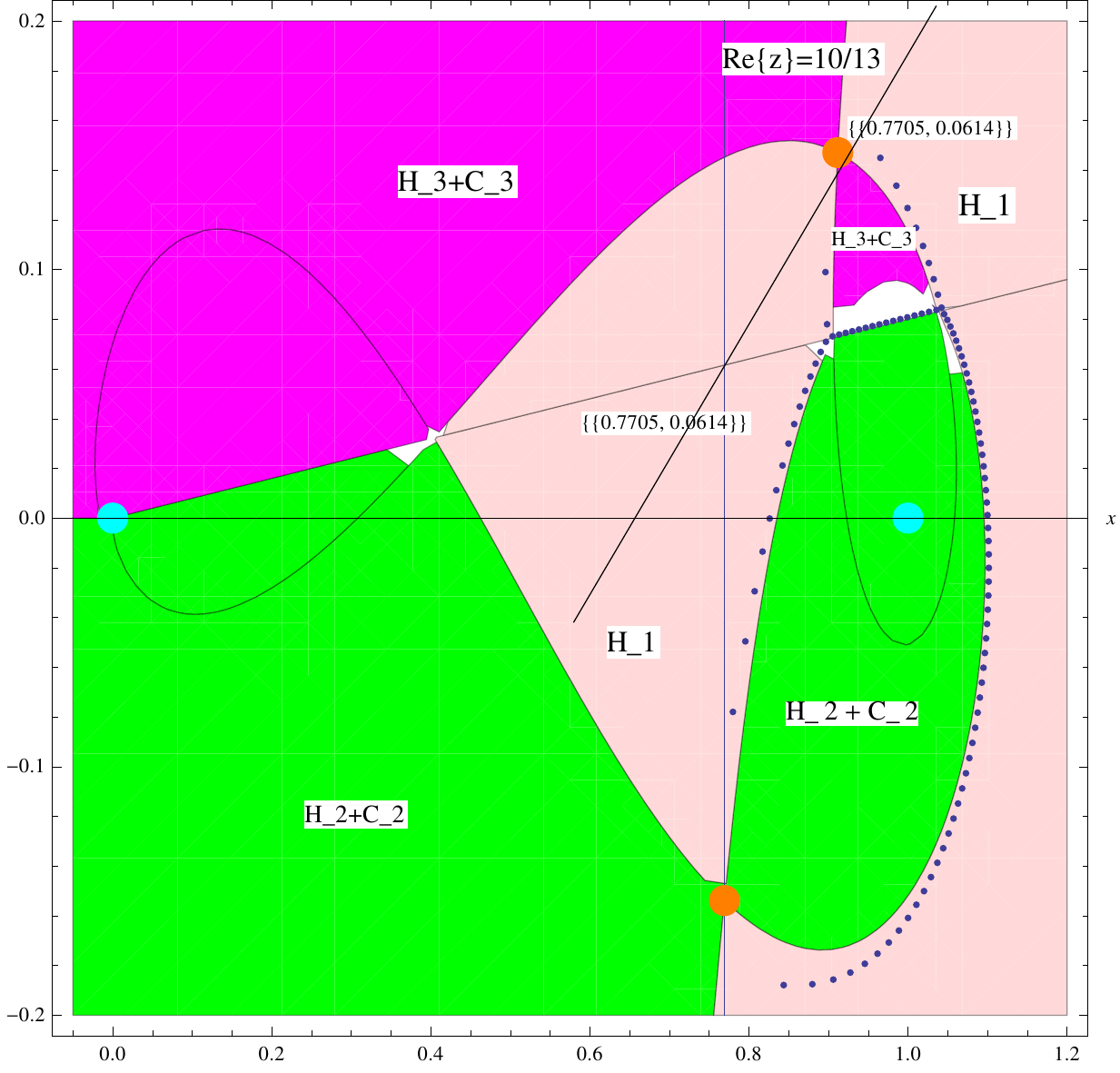}
\caption{Region plot of $\psi(z)$ and the zeros of $ p_n$ for $n=100$ }
\label{fig:figure3}
\end{minipage}
\end{figure}
\clearpage

\bibliographystyle{plain}
\bibliography{xx}

  \end{document}